\newcommand{\bydef}{:=}
\DeclareMathOperator{\espan}{\mathrm{span}}
\newcommand{\cA}{\mathcal{A}}
\newcommand{\cC}{\mathcal{C}}
\newcommand{\cK}{\mathcal{K}}
\newcommand{\cL}{\mathcal{L}}
\newcommand{\cQ}{\mathcal{Q}}
\newcommand{\cS}{\mathcal{S}}
\newcommand{\cU}{\mathcal{U}}
\newcommand{\cV}{\mathcal{V}}
\newcommand{\frg}{{\mathfrak g}}
\newcommand{\ZZ}{\mathbb{Z}}
\newcommand{\RR}{\mathbb{R}}
\newcommand{\OO}{\mathbb{O}}
\newcommand{\FF}{\mathbb{F}}
\DeclareMathOperator{\End}{\mathrm{End}}
\DeclareMathOperator{\Aut}{\mathrm{Aut}}
\DeclareMathOperator{\Der}{\mathrm{Der}}
\DeclareMathOperator{\LocDer}{\mathrm{LocDer}}
\DeclareMathOperator{\twoLocDer}{\mathrm{2LocDer}}
\newcommand{\frso}{{\mathfrak{so}}}
\newcommand{\frpsl}{{\mathfrak{psl}}}
\numberwithin{equation}{section}
\newcommand{\rojo}{} 
\newcommand{\subo}{_{\bar 0}}
\newcommand{\nup}{\textup{n}}
\newtheorem{theorem}{Theorem}[section]
\newtheorem*{proposition*}{Proposition}
\newtheorem{lemma}[theorem]{Lemma}
\newtheorem{corollary}[theorem]{Corollary}
\theoremstyle{definition}
\theoremstyle{remark} \newtheorem{remark}[theorem]{Remark}
\begin{document}

\title{Local and $2$-local derivations of Cayley algebras}

\author[Sh. A. Ayupov]{Shavkat Ayupov$^{1,2}$}
\address{$^{1}$V.I.Romanovskiy Institute of Mathematics\\
	Uzbekistan Academy of Sciences\\ University street, 9, Olmazor district, Tashkent, 100174, Uzbekistan}
\address{$^2$National University of Uzbekistan \\
	4, Olmazor district, Tashkent, 100174, Uzbekistan}
\email{\textcolor[rgb]{0.00,0.00,0.84}{shavkat.ayupov@mathinst.uz}}

\author[A.Elduque]{Alberto Elduque$^3$}
\address{$^{3}$Departamento de
Matem\'{a}ticas e Instituto Universitario de Matem\'aticas y
Aplicaciones, Universidad de Zaragoza, 50009 Zaragoza, Spain}
\email{\textcolor[rgb]{0.00,0.00,0.84}{elduque@unizar.es}}
\thanks{${}^3$%
Supported by grants MTM2017-83506-C2-1-P (AEI/FEDER, UE) and E22\_17R
(Gobierno de Arag\'on, Grupo de referencia ``\'Algebra y
Geometr{\'\i}a'', cofunded by Feder 2014-2020 ``Construyendo Europa desde Arag\'on'')}

\author[K. Kudaybergenov]{Karimbergen Kudaybergenov$^{1,4}$}
\address{$^4$Department of Mathematics\\
	Karakalpak State University\\
	1, Ch. Abdirov,   Nukus, 230112, Uzbekistan}
\email{\textcolor[rgb]{0.00,0.00,0.84}{karim2006@mail.ru}}


\subjclass[2010]{Primary 17A75; Secondary  17A36}

\keywords{Cayley algebra, derivation, local derivation, $2$-local derivation}

\date{\today}

\begin{abstract}
The present paper is devoted to the description  of local and $2$-local derivations
on Cayley algebras over an arbitrary field $\FF$.
Given a Cayley  algebra $\cC$
with  norm $\nup$, let $\cC_0$ be its subspace of trace $0$ elements.
We prove that the space of all local derivations of $\cC$ coincides with the Lie algebra
$\{d\in\frso(\cC,\nup)\mid d(1)=0\}$ which is isomorphic to the  orthogonal Lie algebra
$\frso(\cC_0,\nup)$.
Further we prove that, {\rojo surprisingly, the behavior of $2$-local
derivations depends on the Cayley algebra being split or division. Every}
$2$-local derivation on the split Cayley algebra is a derivation, i.e. they form the exceptional Lie algebra $\frg_2(\FF)$ if
$\textrm{char}\FF\neq 2,3$.
 On the other hand, on  division Cayley algebras over a  field $\FF$,  the sets of $2$-local derivations and local derivations coincide, and they are isomorphic to  the Lie algebra $\frso(\cC_0,\nup)$.
As a corollary we obtain  descriptions of local and $2$-local derivations of {\rojo the seven dimensional simple non-Lie Malcev algebras} over  fields of {\rojo characteristic} $\neq 2,3$.
\end{abstract}


\maketitle

\section{Introduction}

Let $\cA$ be an   algebra (not necessary associative). Recall that a linear mapping $d: \cA\to \cA$ is said to be a derivation, if $d(xy)=d(x)y+xd(y)$ for all $x, y\in \cA$.
A  linear mapping  $\Delta$ is said to be a
local  derivation, if for every $x\in \cA$ there exists a derivation $d_x$ on $\cA$ (depending on $x$) such that $\Delta(x)=d_x(x)$.
This notion was  introduced and investigated independently by R.V. Kadison~\cite{Kadison90} and D.R.
Larson and A.R. Sourour~\cite{Larson90}.
The above papers gave rise to a series of works devoted to the description of mappings which are close to automorphisms and derivations of $C^\ast$-algebras and  operator algebras.

In 1997, P.\v{S}emrl \cite{Semrl97} introduced the concepts of
$2$-local derivations and $2$-automorphisms.
Recall that a mapping $\Delta:\cA \to \cA$ (not necessary linear) is said to be a $2$-local derivation, if for every pair $x,y\in \cA$ there exists a derivation $d_{x,y}:\cA \to \cA$ (depending on $x,y$) such that
$\Delta(x)=d_{x,y}(x)$, $\Delta(y)=d_{x,y}(y)$.

P. \v{S}emrl  \cite{Semrl97} {\rojo described} $2$-local derivations  on the algebra $B(H)$  of all bounded linear operators on the infinite-dimensional separable Hilbert space $H$,  by proving that every $2$-local  derivation on $B(H)$  is a  derivation.
A detailed discussion of $2$-local derivations on operator algebras can be found in the survey \cite{AKR}.

In \cite{AKA21}  a general form of local derivations on the real Cayley   algebra $\OO$ was obtained.
This description  implies that the space of all local derivations on $\OO$,  when equipped with Lie bracket, is isomorphic
to the Lie algebra $\frso_7(\RR)$ of all real skew-symmetric $7\times 7$-matrices.
Also it is proved that   any $2$-local derivation on a Cayley  algebra
$\cC$ over an algebraically closed field $\FF$  is a derivation.

The present paper is devoted to the description of local and $2$-local derivations {\rojo on Cayley algebras} over an arbitrary field, and it is organized as follows.

In Section 2 we give the necessary information concerning Cayley algebras and their derivations.

In Section 3 we find  a general form for the local derivations on a Cayley  algebra $\cC$ over an arbitrary field $\FF$  with a norm $\nup$. Namely, we prove that  the space of local derivations of $\cC$ is the Lie algebra
$\{d\in\frso(\cC,\nup)\mid d(1)=0\}$ (Theorem~\ref{th:main}).

In Section 4 we consider $2$-local derivations on {\rojo a} Cayley  algebra $\cC$.
{\rojo It turns out that the structure} of $2$-local derivations on a Cayley algebra $\cC$ depends on the norm $\nup$. If the norm $\nup$ is isotropic, then $\cC$ is the split Cayley algebra and  all $2$-local derivations on $\cC$ are derivations (Theorem~\ref{2localsplit}).
Further, if the norm $\nup$ is anisotropic, then $\cC$ is a division  Cayley algebra, and
n this case the spaces of all $2$-local
derivations and local derivations  on $\cC$ coincide, and both are isomorphic to  $\frso(\cC_0,\nup)$ (Theorem~\ref{th:2local_division}).

Section 5 is devoted to applications of the above results to the  description of local and $2$-local derivations of the seven dimensional simple Malcev algebras
over a field of {\rojo characteristic $\neq 2$.}

\section{Cayley algebras}

Let $\FF$  be an arbitrary field. Cayley {\rojo (or octonion)} algebras  over $\FF$
constitute a well-known class
of nonassociative algebras. They are unital nonassociative
algebras $\cC$  of dimension eight over $\FF$,
endowed with a nonsingular quadratic multiplicative form (the norm)
$\nup : \cC \to  \FF$. Hence
$$
\nup(xy)=\nup(x)\nup(y)
$$
for all  $x, y \in  \mathcal{C}$, and the polar form
$$
\nup(x, y)\bydef\nup(x+y)-\nup(x)-\nup(y)
$$
is a nondegenerate bilinear form.

Any element in a Cayley algebra $\cC$ satisfies the degree $2$ equation:
\begin{align}\label{xsqn}
	x^2-\nup(x,1)x+\nup(x)1=0.
\end{align}
The map $x \to  \overline{x}=\nup(x, 1)1-x$ is an involution
and the
trace $t(x)=\nup(x, 1)$ and norm $\nup(x)$ are given by $t(x)1=x+\overline{x}$,
$\nup(x)1 = x\overline{x}=\overline{x}x$ for all $x \in \cC$.

Note that two Cayley algebras $\mathcal{C}_1$ and $\mathcal{C}_2$, with respective norms $\nup_1$
and $\nup_2$, are isomorphic if and only if the norms {\rojo $\nup_1$ and $\nup_2$} are
isometric (see \cite[Corollary 4.7]{EKmon}). {\rojo It must be remarked that Cayley
algebras are alternative, that is, the subalgebra generated by any two elements
is associative.}

From \eqref{xsqn}, we get
\begin{align}\label{jorprod}
	xy+yx-\nup(x,1)y-\nup(y,1)x +\nup(x,y)1=0,
\end{align}
for all $x,y\in\cC$.

Recall that the  norm $\nup$  is isotropic if there is a non zero element $x\in \cC$ with $\nup(x)=0$, otherwise its called anisotropic.
Note that any Cayley algebra with anisotropic norm is a division algebra.

It is known that, up to isomorphism, there is a unique Cayley algebra whose
norm is isotropic. It is called the split Cayley algebra. A split Cayley
$\cC$ admits a \emph{canonical basis} $\{e_1, e_2, u_1, u_2, u_3, v_1, v_2, v_3\}$. {\rojo The multiplication table in this basis is given in Table
\ref{table:1} (see \cite[\S 4.1]{EKmon}).}
\begin{table}[h!]
\caption{}
\label{table:1}
	\centering
	\begin{tabular}{ | c | c c | c c c | c c c |}
		\hline
		& $e_1$ & $e_2$ & $u_1$ & $u_2$ & $u_3$ & $v_1$ & $v_2$ & $v_3$ \\
		\hline
		$e_1$ & $e_1$ & 0  & $u_1$ & $u_2$ & $u_3$ & 0 & 0 & 0 \\
		$e_2$ & 0 & $e_2$  & 0 & 0 &  0 &  $v_1$ & $v_2$ & $v_3$ \\
		\hline
		$u_1$ & 0  & $u_1$ & 0 & $v_3$ & $-v_2$  & $-e_1$ & 0 & 0 \\
		$u_2$ & 0 & $u_2$  & $-v_3$ & 0 & $v_1$ & 0 & $-e_1$ & 0 \\
		$u_3$ & 0 & $u_3$  & $v_2$ & $-v_1$ & 0 & 0 & 0 & $-e_1$ \\
		\hline
		$v_1$ & $v_1$ & 0  & $-e_2$ & 0 & 0 & 0 & $u_3$  & $-u_2$ \\
		$v_2$ & $v_2$ & 0  & 0 & $-e_2$ & 0 & $-u_3$ & 0 & $u_1$ \\
		$v_3$ & $v_3$ & 0  & 0 & 0     & $-e_2$ & $u_2$ & $-u_1$ & 0  \\
		\hline
	\end{tabular}
\end{table}

Recall too that $\cK=\FF e_1+ \FF e_2$, which is isomorphic to $\FF\times \FF$, is the split Hurwitz
algebra of dimension $2$, and with $\cU = \FF u_1+\FF u_2+\FF u_3$ and $\cV = \FF v_1+\FF v_2+\FF v_3$,
the decomposition $\cC=\cK\oplus \cU\oplus \cV$ is a $\ZZ_3$-grading:
$\cC_{\overline{0}}=\cK, \, \cC_{\overline{1}}=\cU,\, \cC_{\overline{2}}=\cV$.
This induces a $\ZZ_3$-grading on the Lie algebra of derivations
$\frg=\Der(\cC)$, with
$\frg_i = \left\{d \in \frg \mid d(\cC_i) \subset  \cC_{i+j}\,\,\, \textrm{for all}\,\,\, j \in  \ZZ_3\right\}$.

Let $\cC$ be a Cayley (or octonion) algebra over an arbitrary field $\FF$.
Note that any derivation on $\mathcal{C}$ is skew-symmetric with respect to the norm $\nup$, i.e., {\rojo $\nup\bigl(x,d(x)\bigr)=0$ for all $x\in\cC$, and this implies}
\begin{align}\label{der-skew}
\nup(d(x), y)+\nup(x, d(y))=0
\end{align}
for all $x, y\in \cC$.

{\rojo It is well-known that, over a field $\FF$ of characteristic $\neq 2,3$, the Lie algebra of all derivations $\Der(\cC)$ of a Cayley algebra
$\cC$
is a central simple Lie algebra of type $G_2$ and, conversely, any central simple Lie algebra of type $G_2$ is obtained, up to isomorphism, in this way. (See
\cite[Theorem IV.4.1]{Seligman} and the references there in.)

Over fields of characteristic $3$, $\Der(\cC)$ is no longer simple, but has a unique
minimal ideal, which is simple and isomorphic to the Lie algebra $\cC_0$ of trace
zero elements with the Lie bracket $[x,y]=xy-yx$, which is a central
simple Lie algebra of type $A_2$. The quotient modulo this unique
minimal ideal is again isomorphic to $\cC_0$. On the other hand, over fields of
characteristic $2$, $\Der(\cC)$ is isomorphic to the split simple Lie algebra
$\frpsl_4(\FF)$ of type $A_3$, and surprisingly this does not depend on whether
$\cC$ is the split algebra or a division algebra. (See \cite[\S\S 3,4]{C-RE16} and the references there in.)}

\smallskip

The arguments in \cite[\S 4.4]{EKmon} show that the coordinate matrix in this basis of any derivation of $\cC$ is of the form:
\begin{equation}\label{eq:derivation_matrix}
	\begin{pmatrix}
		0&0&\alpha'&\beta'&\gamma'&-\alpha&-\beta&-\gamma\\
		0&0&-\alpha'&-\beta'&-\gamma'&\alpha&\beta&\gamma\\
		\alpha&-\alpha&a_{11}&a_{12}&a_{13}&0&\gamma'&-\beta'\\
		\beta&-\beta&a_{21}&a_{22}&a_{23}&-\gamma'&0&\alpha'\\
		\gamma&-\gamma&a_{31}&a_{32}&a_{33}&\beta'&-\alpha'&0\\
		-\alpha'&\alpha'&0&\gamma&-\beta&-a_{11}&-a_{21}&-a_{31}\\
		-\beta'&\beta'&-\gamma&0&\alpha&-a_{12}&-a_{22}&-a_{32}\\
		-\gamma'&\gamma'&\beta&-\alpha&0&-a_{13}&-a_{23}&-a_{33}
	\end{pmatrix},
\end{equation}
with $a_{11}+a_{22}+a_{33}=0$.

\

\section{Local derivations on Cayley  algebras}


The main result of this section is the following theorem.

\begin{theorem}\label{th:main}
	Let $\cC$ be a {\rojo Cayley  algebra} over an arbitrary field with norm
	$\nup$.
	Then the space of local derivations of $\cC$ is the Lie algebra
	$\{d\in\frso(\cC,\nup)\mid d(1)=0\}$.
\end{theorem}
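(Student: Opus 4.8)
The plan is to prove the two inclusions separately. Write $\cS\bydef\{d\in\frso(\cC,\nup)\mid d(1)=0\}$ and, for $x\in\cC$, abbreviate $\Der(\cC)\cdot x\bydef\{\delta(x)\mid\delta\in\Der(\cC)\}$ and $\cS\cdot x\bydef\{d(x)\mid d\in\cS\}$. The inclusion of the space of local derivations into $\cS$ is the easy half. If $\Delta$ is a local derivation with $\Delta(x)=d_x(x)$, $d_x\in\Der(\cC)$, then $\Delta(1)=d_1(1)=0$ because every derivation kills $1$, and $\nup\bigl(\Delta(x),x\bigr)=\nup\bigl(d_x(x),x\bigr)=0$ by the skew-symmetry of derivations. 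Replacing $x$ by $x+y$ and using linearity of $\Delta$ gives $\nup\bigl(\Delta(x),y\bigr)+\nup\bigl(\Delta(y),x\bigr)=0$, i.e. $\Delta\in\frso(\cC,\nup)$; together with $\Delta(1)=0$ this yields $\Delta\in\cS$. This step is uniform in the characteristic.

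For the converse I would reduce everything to one assertion: $\Der(\cC)\cdot x=\cS\cdot x$ for every $x\in\cC$. Granting it, any $d\in\cS$ has $d(x)\in\cS\cdot x=\Der(\cC)\cdot x$ for all $x$, so some derivation agrees with $d$ at $x$ and $d$ is a local derivation. Since \eqref{der-skew} shows $\Der(\cC)\subseteq\cS$, the inclusion $\Der(\cC)\cdot x\subseteq\cS\cdot x$ is automatic, and it remains to prove the reverse, which (both sides being finite-dimensional subspaces of $\cC$) amounts to the equality of dimensions $\dim_\FF\Der(\cC)\cdot x=\dim_\FF\cS\cdot x$. The advantage of phrasing the goal through these two evaluation images, rather than through $x^{\perp}$, is that the identity $\Der(\cC)\cdot x=\cS\cdot x$ remains meaningful and is still the correct statement even when the polar form of $\nup$ degenerates.

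Both dimensions are ranks of the evaluation maps $\delta\mapsto\delta(x)$ on $\Der(\cC)$ and on $\cS$, so they are unchanged under extension of scalars to the algebraic closure $\Falg$; moreover $\Der(\cC)\otimes_\FF\Falg=\Der(\cC\otimes_\FF\Falg)$ and $\cS\otimes_\FF\Falg$ is the corresponding space for $\cC\otimes_\FF\Falg$, since both are cut out by linear conditions. Hence I may assume $\cC$ is the split Cayley algebra over an algebraically closed field and compute in the canonical basis. In characteristic $\neq 2$ I would subtract the scalar part and assume $x\in\cC_0$; using that $\Aut(\cC)$ acts transitively on the nonzero isotropic vectors of $\cC_0$ and on each nonzero level set of $\nup$, together with the rescaling $x\mapsto\lambda x$ (which scales both evaluation images equally and, over $\Falg$, identifies all nonzero norm values), I would reduce $x$ to $0$, $u_1$, or $e_1-e_2$. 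For each, the image is read off directly from the explicit derivation matrix \eqref{eq:derivation_matrix}: the third column gives $\Der(\cC)\cdot u_1=\espan\{e_1-e_2,u_1,u_2,u_3,v_2,v_3\}$, and the difference of the first two columns gives $\Der(\cC)\cdot(e_1-e_2)$; a parallel computation for $\cS$ shows the two images coincide in each case.

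The main obstacle is characteristic $2$, and to a lesser extent characteristic $3$. In characteristic $3$ the algebra $\Der(\cC)$ is no longer of type $\frg_2$, so the orbit picture behind the transitivity argument must be replaced by a direct check on the representatives via \eqref{eq:derivation_matrix}, which holds verbatim. Characteristic $2$ is genuinely delicate: there $\nup(1,1)=0$, so $1\in\cC_0$, the polar form is degenerate on $\cC_0$, and the splitting $\cC=\FF1\oplus\cC_0$ fails, so $x$ can no longer be normalized to a trace-zero vector. This is precisely why I keep the target in the form $\Der(\cC)\cdot x=\cS\cdot x$: it stays correct in this degenerate setting (for instance both sides vanish at $x=1$), and I expect the characteristic-$2$ case to require an explicit matrix computation over $\Falg$ on a full list of orbit representatives of $\cC\otimes_\FF\Falg$, in place of the clean dimension count available in the remaining characteristics.
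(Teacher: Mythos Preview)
Your strategy is essentially the paper's: reduce the hard inclusion to computing $\Der(\cC)\cdot x$, extend scalars to the algebraic closure, use the $\Aut(\cC)$-action to reduce to a short list of representatives, and read the answer off the explicit matrix \eqref{eq:derivation_matrix}. Two refinements in the paper make the argument cleaner and remove the characteristic-$2$ headache you anticipate.

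First, rather than computing $\cS\cdot x$ separately and matching it against $\Der(\cC)\cdot x$, the paper identifies the target directly: for $x\notin\FF 1$ one has $\Der(\cC)\cdot x=(\FF 1+\FF x)^\perp$ (Lemma~\ref{le:derC}). Since any $d\in\cS$ satisfies $d(x)\perp 1$ (from $d(1)=0$ and skewness) and $d(x)\perp x$, the containment $\cS\cdot x\subseteq(\FF 1+\FF x)^\perp$ is immediate, so once you know $\Der(\cC)\cdot x$ hits the whole orthogonal complement, the ``parallel computation for $\cS$'' becomes unnecessary. Your worry that $x^\perp$ misbehaves in characteristic $2$ is misplaced here: the polar form on $\cC$ itself is nondegenerate in every characteristic, so $(\FF 1+\FF x)^\perp$ is always six-dimensional for $x\notin\FF 1$; it is only its restriction to $\cC_0$ that degenerates.

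Second, the paper's orbit reduction (Lemma~\ref{le:orbits}) classifies elements of $\cC\setminus\FF 1$ by their \emph{trace and norm} rather than by first subtracting the scalar part. Over $\Falg$ this gives, uniformly in the characteristic, exactly two families of representatives: $\alpha e_1+\beta e_2$ when the minimal polynomial is separable, and $\lambda 1+u_1$ otherwise. This sidesteps the failure of $\cC=\FF 1\oplus\cC_0$ in characteristic $2$ entirely, so no separate case analysis is needed. (Your remark about characteristic $3$ is also unnecessary: the transitivity statement concerns $\Aut(\cC)$, not $\Der(\cC)$, and is unaffected by the structure of the derivation algebra.)
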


\begin{remark}
	Let $\cC_0$ be the subspace of trace $0$ elements.
	If the characteristic of $\FF$ is not $2$, then $\cC=\FF 1\oplus\cC_0$.
	Embed $\End_\FF(\cC_0)$ inside $\End_\FF(\cC)$ by extending any endomorphism $\varphi$ of $\cC_0$ to an endomorphism of $\cC$ by imposing $\varphi(1)=0$. Then
	(characteristic not $2$) the Lie algebra in Theorem \ref{th:main} is naturally identified with $\frso(\cC_0,\nup)=\{d\in \End_\FF(\cC_0)\mid
	\nup\bigl(v,d(v)\bigr)=0\ \forall v\in \cC_0\}$. The trace of any element
	in $\frso(\cC_0,\nup)$ is zero.
	
	However, if the characteristic of $\FF$ is $2$, then $1\in\cC_0$. Here
	the radical of
	the restriction of the polarization of $\nup$ to $\cC_0$ is $\FF 1$. In this case the right definition of $\frso(\cC_0,\nup)$ is
	$\{d\in \End_\FF(\cC_0)\mid \operatorname{trace}(d)=0\ \text{and}\
	\nup\bigl(v,d(v)\bigr)=0\ \forall v\in \cC_0\}$. It follows that any
	element in $\frso(\cC_0,\nup)$ annihilates $1$. Moreover, any element
	$d\in\frso(\cC,\nup)$ with $d(1)=0$ leaves $\cC_0$ invariant, and its restriction to $\cC_0$ lies in $\frso(\cC_0,\nup)$. Conversely, pick any element
	$u\in\cC\setminus\FF 1$, so that $\cC=\FF u\oplus\cC_0$.  Any element
	$d\in\frso(\cC_0,\nup)$ can be extended to an element of
	$\frso(\cC,\nup)$ by defining $d(u)$ as the unique element of $\cC$
	such that $\nup\bigl(d(u),u\bigr)=0$ and
	$\nup\bigl(d(u),x\bigr)=\nup\bigl(u,d(x)\bigr)$ for all $x\in \cC_0$. Therefore, the restriction
	\[
	\begin{split}
	\{d\in\frso(\cC,\nup)\mid d(1)=0\}&\longrightarrow \frso(\cC_0,\nup)\\
	d\quad&\mapsto\quad d\vert_{\cC_0}
	\end{split}
	\]
	is an isomorphism of Lie algebras also in characteristic $2$.
	
	Therefore, in any characteristic, we can identify canonically the Lie algebra
	$\{d\in\frso(\cC,\nup)\mid d(1)=0\}$ with $\frso(\cC_0,\nup)$.
\end{remark}

\smallskip

The proof of Theorem \ref{th:main} is based on two lemmas. The first one
is surely well-known. {\rojo For Cayley division algebras over fields of characteristic not two, it follows from the Cayley-Dickson doubling process as in
\cite[Corollary 1.7.2]{SV2000}. We include a proof, valid over arbitrary fields, for completeness.}

\begin{lemma}\label{le:orbits}
	Let $\cC$ be a Cayley  over a field $\FF$ with norm $\nup$. Any two elements of
	$\cC\setminus \FF 1$ are conjugate under $\Aut\cC$ if and only if they have the same norm and the same trace.
\end{lemma}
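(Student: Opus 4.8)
The ``only if'' direction is immediate. Every $\phi\in\Aut\cC$ fixes $1$, since $\phi(1)\phi(x)=\phi(x)$ for all $x$ forces $\phi(1)$ to be the identity; consequently $\phi(\FF1)=\FF1$. For $x\notin\FF1$ the pair $\{1,x\}$ is linearly independent, so $T^2-t(x)T+\nup(x)$ is the \emph{unique} monic quadratic over $\FF$ annihilating $x$ (by \eqref{xsqn}). Applying $\phi$ to that relation shows $\phi(x)\notin\FF1$ satisfies the same quadratic, whence $t(\phi(x))=t(x)$ and $\nup(\phi(x))=\nup(x)$. Thus conjugate elements of $\cC\setminus\FF1$ share trace and norm.

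For the converse, fix $x,y\in\cC\setminus\FF1$ with $t(x)=t(y)=\tau$ and $\nup(x)=\nup(y)=\nu$. Since $x$ and $y$ obey the same quadratic $T^2-\tau T+\nu$ and $\{1,x\},\{1,y\}$ are bases of the $2$-dimensional unital subalgebras $\FF[x],\FF[y]$, the assignment $1\mapsto1$, $x\mapsto y$ defines an isomorphism $\sigma\colon\FF[x]\to\FF[y]$; it is moreover a norm isometry, because the Gram matrix of $\{1,x\}$ for the polar form of $\nup$ depends only on $\tau$ and $\nu$. The plan is to extend $\sigma$ to an automorphism of $\cC$, the main engine being the standard fact that any isometric isomorphism between composition (Hurwitz) subalgebras of a Cayley algebra extends to an automorphism of $\cC$ (a consequence of the Cayley--Dickson doubling process, cf.\ \cite{SV2000}). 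When $\nup|_{\FF[x]}$ is nondegenerate, $\FF[x]$ and $\FF[y]$ are composition subalgebras and this extension theorem applies verbatim, producing $\phi\in\Aut\cC$ with $\phi(x)=y$. Computing the Gram determinant shows that nondegeneracy holds exactly when $\tau^2\neq4\nu$ (characteristic $\neq2$), respectively $\tau\neq0$ (characteristic $2$); in particular this settles every element of a Cayley division algebra in characteristic $\neq2$, matching the classical doubling argument of \cite{SV2000}.

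The remaining degenerate case, where $\nup|_{\FF[x]}$ is singular, is the main obstacle, because then $\FF[x]$ is not a composition subalgebra and $\sigma$ cannot be propagated directly. In characteristic $\neq2$ singularity forces $x-\tfrac{\tau}{2}1$ to be a nonzero isotropic element of $\cC_0$, so $\nup$ is isotropic, $\cC$ is split, and subtracting the fixed scalar $\tfrac{\tau}{2}1$ reduces the claim to transitivity of $\Aut\cC$ on the nonzero isotropic vectors of $\cC_0$, a classical fact for the split algebra that can be verified directly in the canonical basis of Table~\ref{table:1}. In characteristic $2$ singularity is the case $\tau=0$, i.e.\ $x^2=\nu1$, which occurs for both split and division algebras; I would treat it by embedding $x$ into a quaternion subalgebra $Q$ on which the restricted norm is nondegenerate and then transporting $x$ to $y$ by an inner automorphism of $Q$ (Skolem--Noether) before invoking the extension theorem for $Q$. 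The crux throughout is that in the degenerate case $x$ and $y$ need not generate a common nondegenerate subalgebra---indeed two isotropic trace-zero vectors may fail to lie in any common quaternion subalgebra---so the required transitivity must be extracted from the global structure of $\cC$ rather than from a naive extension of $\sigma$ itself.
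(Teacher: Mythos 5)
Your ``only if'' direction and your treatment of the nondegenerate case agree with the paper: when $\FF 1+\FF x$ is a two-dimensional composition subalgebra (equivalently, the minimal polynomial $X^2-\tau X+\nu$ is separable: $\tau^2\neq 4\nu$ in characteristic $\neq 2$, $\tau\neq 0$ in characteristic $2$), one extends the isomorphism $\FF 1+\FF x\to\FF 1+\FF y$, $x\mapsto y$, to an automorphism of $\cC$ by Cayley--Dickson doubling, exactly as in the paper. The problems are in the degenerate case. For characteristic $\neq 2$ you correctly reduce to transitivity of $\Aut\cC$ on nonzero square-zero elements of the split algebra, but you only assert this as a classical fact ``verified directly in the canonical basis''. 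That justification does not work as stated: the element is arbitrary, and what is needed is that \emph{every} nonzero $x$ with $x^2=0$ can be completed to a canonical basis. The paper proves precisely this: choosing $y$ with $\nup(x,\bar y)=1$, one checks that $e_1\bydef xy$ is an idempotent with $e_1x=x$ and $xe_1=0$, so $x$ lies in the Peirce space $e_1\cC e_2$ and may be taken to be $u_1$. This construction is the technical core of the lemma and cannot be replaced by an inspection of Table~\ref{table:1}.

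The more serious gap is characteristic $2$ with $\tau=0$ and $\nu\neq 0$, which is unavoidable even for division algebras since every trace-zero element falls into it. Your plan --- embed $x$ in a quaternion subalgebra $Q$ with nondegenerate norm and apply Skolem--Noether inside $Q$ --- only moves $x$ to other elements of $Q$, and says nothing about reaching $y$ unless $y$ already lies in $Q$; your own closing sentence concedes that $x$ and $y$ need not lie in a common quaternion subalgebra, and you do not supply the missing argument. The paper closes this case as follows: if $1\notin\FF x+\FF y$, pick $u$ with $\nup(u,1)=1$ and $\nup(u,x)=0=\nup(u,y)$, so that $\cK=\FF 1+\FF u$ is a composition subalgebra orthogonal to both $x$ and $y$; then $\cQ_1=\cK\oplus\cK x$ and $\cQ_2=\cK\oplus\cK y$ are isomorphic quaternion subalgebras via an isomorphism fixing $\cK$ and sending $x$ to $y$, which extends to an automorphism of $\cC$ by doubling. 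If instead $1\in\FF x+\FF y$, a short computation with norms forces either $x=y$ or $\nu=\gamma^2\in\FF^2$, and then $x+\gamma 1$ and $y+\gamma 1$ square to zero, reducing to the square-zero orbit argument above. Without some version of these two steps your proof does not go through in characteristic $2$.
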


\begin{proof}
Let $a,b\in\cC\setminus\FF 1$ with $t(a)=t(b)=\alpha$, and $\nup(a)=\nup(b)=\mu$.
The minimal polynomial of both $a$ and $b$ is $p(X)=X^2-\alpha X+\mu$.
	
If $p(X)$ is separable, then $\cK_1=\FF 1+\FF a$ and $\cK_2=\FF 1 +\FF b$ are isomorphic
	two-dimensional composition subalgebras of $\cC$, and we use the Cayley-Dickson
	doubling process to extend any isomorphim between $\cK_1$ and $\cK_2$ carrying $a$ to $b$ to an automorphism of $\cC$.
	
	Otherwise, either the characteristic of $\FF$ is not $2$ and
	$p(X)=\bigl(X-\frac{\alpha}{2}\bigr)^2$, or the characteristic of $\FF$ is $2$ and
	$\alpha=0$.
	
	Note that if $x\in\cC\setminus\FF 1$ satisfies $x^2=0$ and we pick $y\in\cC$ with
	$\nup(x,\bar y)=1$, then
	$1=\nup(x,\bar y)=-\bar x y-\bar y x=xy-\bar y x,
	$ and then
	$e_1=xy$ is a nonzero idempotent, because
	$e_1^2=xyxy=(1+\bar y x)xy=xy+\bar yx^2y=e_1$. Also, we have
	$e_1x=xyx=(1+\bar yx)x=x$ and $xe_1=x^2y=0$. Hence $x$ lies in the Peirce component
	$\{u\in\cC\mid e_1x=x,\ xe_1=0\}=e_1\cC e_2$, and as in \cite[\S 4.4]{EKmon} we may take a
	canonical basis of $\cC$ with $x=u_1$. In particular, if $\alpha=0=\mu$, then $a$ and $b$
	are both conjugate to $u_1$.
	
	Also, if the characteristic of $\FF$ is not $2$ and
	$p(X)=\bigl(X-\frac{\alpha}{2}\bigr)^2$, then there are automorphisms $\varphi,\psi$ of $\cC$ such that $\varphi(a-\frac{\alpha}{2}1)=u_1=\psi(b-\frac{\alpha}{2}1)$,
	and hence $\psi^{-1}\circ\varphi(a)=b$.
	
	Finally, if the characteristic of $\FF$ is $2$ and $\alpha=0$ but $\mu\neq 0$, we distinguish two cases:
	\begin{itemize}
		\item If $1\in\FF a+\FF b$, then $b=\epsilon 1+\delta a$ with $\delta\neq 0$, so
		$\mu=\nup(a)=\nup(b)=\epsilon^2+\delta^2\nup(a)$. Hence
		$(1+\delta)^2\nup(a)=\epsilon^2$
		and either $\delta=1$, $\epsilon=0$ and $a=b$, or $\delta\neq 1$ and
		$\mu=\nup(a)=\nup(b)\in\FF^2$. Write $\mu=\gamma^2$. In this case,
		$(a+\gamma 1)^2=0=(b+\gamma 1)^2$ and again there are automorphisms
		$\varphi,\psi$ of $\cC$ such that $\varphi(a+\gamma 1)=u_1=\psi(b+\gamma 1)$,
		so that $\psi^{-1}\circ\varphi(a)=b$.
		
		\item If, on the contrary, $1\not\in\FF a+\FF b$, then $(\FF a+\FF b)^\perp
		\not\subseteq (\FF 1)^\perp$, so we may pick  an element $u\in\cC$ such that
		$\nup(u,1)=1$ and $\nup(u,a)=0=\nup(u,b)$. Then $\cK=\FF 1+\FF u$ is a
		two-dimensional composition subalgebra of $\cC$ and, by the Cayley-Dickson doubling
		process, the quaternion subalgebra $\cQ_1=\cK\oplus\cK a$ and $\cQ_2=\cK\oplus\cK b$
		are isomorphic, under an isomorphism that is the identity on $\cK$ and takes $a$ to
		$b$. Again, by the Cayley-Dickson doubling process, this isomorphism can be
		extended to an automorphism of $\cC$. (See, for instance,
		\cite[Corollary 1.7.2]{SV2000}.) \qedhere
	\end{itemize}
\end{proof}

\begin{lemma}\label{le:derC}
	Let $\cC$ be a Cayley algebra over an arbitrary field $\FF$, with Lie algebra of derivations $\Der(\cC)$, and let $x$ be any element in $\cC\setminus\FF 1$. Then
	$$
	\Der(\cC)x\bydef\{d(x)\mid d\in\Der(\cC)\}
	=(\FF 1+\FF x)^\perp=\bigl(\FF x\bigr)^\perp\cap \cC_0,
	$$
where $\perp$ denotes the orthogonal subspace relative to the norm $\nup$.
\end{lemma}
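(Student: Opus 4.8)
The plan is to prove the two displayed equalities separately, the second being essentially formal and the first requiring the real work. For the second equality, observe that $\cC_0=(\FF 1)^\perp$, since by definition of the trace $\cC_0=\{y\mid \nup(y,1)=0\}$; hence $(\FF 1+\FF x)^\perp=(\FF 1)^\perp\cap(\FF x)^\perp=\cC_0\cap(\FF x)^\perp$ by the standard behaviour of $\perp$ on sums, and no further input is needed. For the first equality I would first dispatch the easy inclusion $\Der(\cC)x\subseteq(\FF 1+\FF x)^\perp$: every derivation kills the unit, $d(1)=0$ (apply $d$ to $1=1\cdot 1$), and every derivation is skew relative to $\nup$, so from \eqref{der-skew} one gets $\nup\bigl(d(x),1\bigr)=-\nup\bigl(x,d(1)\bigr)=0$ and, by symmetry of the polar form, $\nup\bigl(d(x),x\bigr)=\nup\bigl(x,d(x)\bigr)=0$. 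Thus $d(x)\perp 1$ and $d(x)\perp x$.

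The reverse inclusion is the heart of the matter. Since the polar form is nondegenerate and $\{1,x\}$ is linearly independent, $(\FF 1+\FF x)^\perp$ has dimension $6$; as it already contains $\Der(\cC)x$, the reverse inclusion is \emph{equivalent} to the numerical statement $\dim_\FF\Der(\cC)x=6$. The key point is that this dimension is insensitive to base field extension: $\Der(\cC)$ is cut out inside $\End_\FF(\cC)$ by the Leibniz equations, whose coefficients lie in $\FF$, so $\Der(\cC)\otimes_\FF\overline{\FF}=\Der(\cC\otimes_\FF\overline{\FF})$, and the rank of the evaluation map $d\mapsto d(x)$ is preserved under extension. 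Over $\overline{\FF}$ the norm is isotropic, so $\cC\otimes_\FF\overline{\FF}$ is the split Cayley algebra, while $x\otimes 1\notin\overline{\FF}1$ because $\{1,x\}$ stays independent. It therefore suffices to prove $\dim\Der(\cC)x=6$ for the split algebra over an algebraically closed field.

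Over $\overline{\FF}$ I would reduce to a handful of explicit representatives. Both the equality and the value $\dim\Der(\cC)x$ are unchanged when $x$ is replaced by $\varphi(x)$ with $\varphi\in\Aut\cC$ (which conjugates derivations and is an isometry), by $\lambda x$ with $\lambda\neq 0$ (which merely scales $\Der(\cC)x$), or by $x+\mu 1$ (since $d(1)=0$). Combining Lemma~\ref{le:orbits}, which classifies $\Aut$-orbits on $\cC\setminus\FF 1$ by trace and norm, with these scalings and translations, every non-scalar element reduces to $u_1$ or to a trace-zero norm-one element when $\chr\FF\neq 2$, and to one of $e_1$, $u_1$, $u_1+v_1$ when $\chr\FF=2$. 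For each representative $\Der(\cC)x$ is then computed directly: writing $x$ in the canonical basis and reading off the relevant column(s) of the general derivation matrix \eqref{eq:derivation_matrix}, one verifies that the resulting span is exactly $6$-dimensional and coincides with $(\FF 1+\FF x)^\perp$.

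The main obstacle is precisely the passage to an arbitrary field. For a division Cayley algebra there are no isotropic elements, so $x$ cannot be normalised to $u_1$ directly; and the tempting shortcut of identifying $\Der(\cC)x$ with the tangent space to the $\Aut\cC$-orbit of $x$ can fail over fields of characteristic $2$ or $3$, where the orbit map need not be separable and the image of the differential may be strictly smaller than the geometric tangent space. The base-change-to-$\overline{\FF}$ reduction sidesteps both difficulties, leaving only a finite, characteristic-by-characteristic linear-algebra check on the split algebra, where the explicit matrix \eqref{eq:derivation_matrix} is available in every characteristic.
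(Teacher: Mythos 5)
Your proposal is correct and follows essentially the same route as the paper's proof: reduce by scalar extension to the split algebra over $\overline{\FF}$, use Lemma~\ref{le:orbits} to normalize $x$ to explicit representatives in a canonical basis (the paper uses $\alpha e_1+\beta e_2$ in the separable case and $\lambda 1+u_1$ otherwise, rather than your further translated/scaled list), and read off $\Der(\cC)x$ from the matrix \eqref{eq:derivation_matrix}. Your additional remarks --- the easy inclusion via skew-symmetry, the reformulation as the dimension count $\dim\Der(\cC)x=6$, and the justification that $\Der(\cC)$ and the rank of the evaluation map are stable under base change --- only make explicit what the paper leaves implicit in ``we may extend scalars.''
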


\begin{proof}
	We may extend scalars and assume that $\FF$ is algebraically closed. In particular, $\cC$ is split. Take a canonical basis of $\cC$ as in Table~\ref{table:1}. If the minimal polynomial of $x$ is separable:
	$p(X)=(X-\alpha)(X-\beta)$, with $\alpha\neq\beta$, then Lemma \ref{le:orbits}
	shows that $x$ is conjugate to $\alpha e_1+\beta e_2$. But
Equation \eqref{eq:derivation_matrix} shows that
\[
	\Der(\cC)(\alpha e_1+\beta e_2)=\cU\oplus
	\cV=(\FF e_1+\FF e_2)^\perp
	=\bigl(\FF 1+\FF (\alpha e_1+\beta e_2)\bigr)^\perp
\]
and we are done in this case.
	
	Otherwise the minimal polynomial of $x$ is of the form
$p(X)=(X-\lambda)^2$ for some $\lambda\in \FF$. By Lemma \ref{le:orbits}, $x-\lambda 1$ is conjugate to $u_1$, and hence $x$ is conjugate to $\lambda 1+u_1$. Now Equation \eqref{eq:derivation_matrix} shows that
\[
	\Der(\cC)(\lambda 1+u_1)=\Der(\cC)u_1
	=\FF (e_1-e_2)\oplus \cU\oplus \FF v_2\oplus\FF v_3=(\FF 1+\FF u_1)^\perp,
\]
	and we are done.
\end{proof}

\bigskip

\begin{proof}[Proof of Theorem \ref{th:main}.] If $\Delta$ is a local derivation of $\cC$,
$\Delta(1)=0$ and for any $x\in\cC$, we get $\nup\bigl(x,\Delta(x)\bigr)=0$, as
$\Delta(x)=d(x)$ for a derivation $d$ and $\Der(\cC)$ is contained in
$\{\phi\in\frso(\cC,\nup)\mid \phi(1)=0\}$. Hence the space of local derivations of
$\cC$ is contained in $\{\phi\in\frso(\cC,\nup)\mid \phi(1)=0\}$.

Conversely, given any $\Delta$ in $\{\phi\in\frso(\cC,\nup)\mid \phi(1)=0\}$
and any $x\in \cC$, if $x\in\FF 1$, then $\Delta(x)=0$, while if
$x\in\cC\setminus\FF 1$,
$\nup\bigl(x,\Delta(x)\bigr)=0=\nup\bigl(1,\Delta(x)\bigr)$, so
$\Delta(x)\in \bigl(\FF x\bigr)^\perp\cap\cC_0$.
Hence $\Delta(x)\in \Der(\cC)x$ by Lemma \ref{le:derC}, and it follows that there is a derivation $d\in\Der(\cC)$ such that $\Delta(x)=d(x)$.
\end{proof}

Note that Theorem~\ref{th:main} implies the following result (see \cite[Theorem 1.1]{AKA21}).

\begin{corollary}
The space $\LocDer\left(\OO\right)$ of all local derivations on the real Cayley algebra $\OO$ equipped with the Lie bracket is isomorphic to the Lie algebra
$\frso_7(\RR)$ of all real skew-symmetric matrices of order $7$.
\end{corollary}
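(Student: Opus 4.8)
The plan is to read the statement off from Theorem~\ref{th:main} once we record the two features that single out the real octonions: that $\chr\RR=0\neq 2$, and that the norm $\nup$ of the division algebra $\OO$ is the standard positive-definite (hence anisotropic) Euclidean form on $\RR^8$. Everything then reduces to translating the coordinate-free orthogonal Lie algebra $\frso(\OO_0,\nup)$ into skew-symmetric matrices.

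First I would apply Theorem~\ref{th:main} with $\cC=\OO$, which gives that $\LocDer(\OO)$, equipped with the commutator bracket, coincides with the Lie algebra $\{d\in\frso(\OO,\nup)\mid d(1)=0\}$. Since $\chr\RR\neq 2$, the Remark following Theorem~\ref{th:main} identifies this Lie algebra canonically, via the restriction map $d\mapsto d\vert_{\OO_0}$, with $\frso(\OO_0,\nup)$, where $\OO_0$ denotes the seven-dimensional subspace of trace-zero octonions. This step is immediate; it is a direct instance of the general result.

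Next I would make the matrix description explicit. Because $\nup$ is positive definite on $\OO$, its restriction to the seven-dimensional subspace $\OO_0$ remains positive definite, so the polar form $\nup(\cdot,\cdot)$ restricted to $\OO_0$ is a positive-definite symmetric bilinear form and therefore admits an orthonormal basis $\{w_1,\dots,w_7\}$ with $\nup(w_i,w_j)=\delta_{ij}$ (after the usual normalization). Writing any $d\in\frso(\OO_0,\nup)$ in this basis as $d(w_j)=\sum_i m_{ij}w_i$, the skew-symmetry condition \eqref{der-skew}, namely $\nup(d(w_j),w_k)+\nup(w_j,d(w_k))=0$, becomes $m_{kj}+m_{jk}=0$; that is, the coordinate matrix $M=(m_{ij})$ satisfies $M^{t}+M=0$. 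Hence $\frso(\OO_0,\nup)\cong\frso_7(\RR)$ as Lie algebras, and composing with the isomorphism of the previous paragraph yields the claim.

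There is no genuine obstacle here, since the corollary is essentially a specialization of Theorem~\ref{th:main}. The only point requiring a little care is confirming that the Cayley algebra under consideration is the \emph{division} algebra, so that its norm is anisotropic and in fact positive definite: this is exactly what guarantees the existence of an orthonormal basis of $\OO_0$ and hence the clean matrix model $\frso_7(\RR)$. For the split real Cayley algebra the norm would instead have signature $(4,4)$, and the same argument would produce $\frso_{4,4}(\RR)$; once positive-definiteness over $\RR$ is recorded, the chain of identifications Theorem~\ref{th:main} $\to$ Remark $\to$ orthonormal coordinates is routine.
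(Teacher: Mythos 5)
Your argument is correct and matches the paper's intent exactly: the corollary is stated there as an immediate consequence of Theorem~\ref{th:main} together with the identification in the subsequent Remark, and your only additional content---choosing an orthonormal basis of $\OO_0$ for the positive-definite norm to realize $\frso(\OO_0,\nup)$ as skew-symmetric $7\times 7$ matrices---is precisely the routine specialization the paper leaves implicit. Your closing remark about the split case yielding $\frso_{4,4}(\RR)$ is a correct and sensible sanity check.
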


\begin{remark}
{\rojo Note that  the dimensions  of the Lie algebras  $\LocDer\left(\cC\right)\cong \frso(\cC_0, \nup)$ and
$\Der\left(\cC\right)$  are equal to $21$ and $14$, respectively.} Therefore the  Cayley algebra $\cC$ admits local derivations which are not derivations.
\end{remark}

\

\section{2-Local derivations on Cayley algebras}

As we mentioned in Section 2, each Cayley algebra is either {\rojo the} split algebra (if the norm is isotropic) or a division algebra (if the norm is anisotropic) (see \cite{EKmon}, \cite{SV2000}).
We shall consider $2$-local derivations separately for each case.

\subsection{2-Local derivations on split  Cayley algebras}

\

\begin{theorem}\label{2localsplit}
	Any $2$-local derivation of the split Cayley algebra $\cC$ over a field
	$\FF$ is a derivation.
\end{theorem}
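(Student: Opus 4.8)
The plan is to prove first that a $2$-local derivation $\Delta$ is automatically \emph{linear}, and only afterwards to upgrade it from a general skew map into a genuine derivation, the latter step being where splitness is essential.

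First I would extract a global skew-symmetry identity. Putting $y=x$ (indeed $x=y=1$) shows $\Delta$ is a not-yet-linear local derivation with $\Delta(1)=0$. For an arbitrary pair $x,y$ choose the derivation $d_{x,y}$ with $\Delta(x)=d_{x,y}(x)$ and $\Delta(y)=d_{x,y}(y)$; applying \eqref{der-skew} to $d_{x,y}$ gives
\[
\nup\bigl(\Delta(x),y\bigr)+\nup\bigl(x,\Delta(y)\bigr)=0\qquad\text{for all }x,y\in\cC .
\]
For fixed $y$ the term $-\nup\bigl(x,\Delta(y)\bigr)$ is linear in $x$, so $x\mapsto\nup\bigl(\Delta(x),y\bigr)$ is linear for every $y$; nondegeneracy of $\nup$ then forces $\Delta(x_1+x_2)=\Delta(x_1)+\Delta(x_2)$ and $\Delta(\lambda x)=\lambda\Delta(x)$. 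Hence $\Delta$ is linear, and as a linear local derivation it lies in $\{d\in\frso(\cC,\nup)\mid d(1)=0\}$ by Theorem~\ref{th:main}. This part is characteristic-free and is the clean core of the argument.

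It remains to push $\Delta$ from the $21$-dimensional $\frso(\cC_0,\nup)$ into the $14$-dimensional $\Der(\cC)$, and here the hypothesis that $\cC$ is split enters decisively. I would work in the canonical basis of Table~\ref{table:1} and compare the matrix of $\Delta$ with the derivation pattern \eqref{eq:derivation_matrix}. Since $\Delta(e_1)\perp 1,e_1$, we have $\Delta(e_1)\in\cU\oplus\cV$, and its coordinates prescribe the six parameters $\alpha,\beta,\gamma,\alpha',\beta',\gamma'$ that any derivation agreeing with $\Delta$ at $e_1$ must carry. Applying the $2$-local property to each pair $(e_1,u_i)$ yields a derivation $d^{(i)}$ with $d^{(i)}(e_1)=\Delta(e_1)$ and $d^{(i)}(u_i)=\Delta(u_i)$; its six parameters are the fixed ones read off $\Delta(e_1)$, so comparing the $(e_1-e_2)$- and $\cV$-components of $d^{(i)}(u_i)$ in \eqref{eq:derivation_matrix} with those of $\Delta(u_i)$ produces a batch of the linear relations defining \eqref{eq:derivation_matrix}. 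Running over $i=1,2,3$ and over the symmetric pairs $(e_2,v_j)$ recovers all the relations tying the blocks together, leaving only the trace condition $a_{11}+a_{22}+a_{33}=0$ unaccounted for.

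The main obstacle is precisely this last relation. It cannot be seen from any single pair, because the $2$-local hypothesis couples values only two at a time: a pair $(u_i,u_j)$ pins just two columns of the associated derivation's $\cU$-block, leaving the third free to absorb its trace. I expect it must instead be extracted by feeding the multiplication table into the $2$-local condition through the Leibniz rule: the derivation $d_{u_1,u_2}$ that matches $\Delta$ on $u_1,u_2$ also satisfies $d_{u_1,u_2}(u_1u_2)=d_{u_1,u_2}(v_3)$ with $u_1u_2=v_3$, which links the $\cU$-block to the $\cV$-block and, combined with the relations already obtained, should force $a_{11}+a_{22}+a_{33}=0$ and hence $\Delta\in\Der(\cC)$. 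This is also the point that explains the contrast with the division case of Theorem~\ref{th:2local_division}: there no canonical basis of isotropic elements exists, the evaluation maps $d\mapsto\bigl(d(x),d(y)\bigr)$ are too small to constrain $\Delta$ beyond the local condition, and $\frso(\cC_0,\nup)$ survives intact.
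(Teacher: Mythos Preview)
Your linearity step is exactly Lemma~\ref{2-locallinear} of the paper, and the reduction via the pairs $(e_1,u_i)$, $(e_2,v_j)$ is correct: it does pin the off-diagonal blocks of $\Delta$ to the derivation pattern \eqref{eq:derivation_matrix}, leaving only the trace of the $3\times 3$ block unconstrained. The gap is precisely where you hedge with ``I expect'' and ``should force''.

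The pair $(u_1,u_2)$ does \emph{not} give the trace relation. After your reduction, $\Delta=d_0+c\,\psi$ with $d_0\in\Der(\cC)$ and, say, $\psi(u_3)=u_3$, $\psi(v_3)=-v_3$, $\psi=0$ elsewhere. A derivation $d$ with $d(u_1)=\Delta(u_1)$ and $d(u_2)=\Delta(u_2)$ always exists regardless of $c$: after subtracting $d_0$ you need $d'(u_1)=0$, $d'(u_2)=0$, and the zero derivation works. More generally, the $2$-local condition at $(u_1,u_2)$ reads only columns $3$ and $4$ of the $A$-block, while the derivation matching them is free to set $a_{33}=-a_{11}-a_{22}$ independently of $\Delta$'s own $a_{33}$. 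Your Leibniz move computes $d_{u_1,u_2}(v_3)=\Delta(u_1)u_2+u_1\Delta(u_2)$, but that is a statement about $d_{u_1,u_2}$, not about $\Delta(v_3)$; it never touches $a_{33}^{\Delta}$. No pair of basis vectors from $\cU\cup\cV$ will do better, for the same reason.

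The paper's route is different and supplies exactly the missing ingredient. It first subtracts a derivation so that $\Delta(e_1)=0=\Delta(e_2)$, then proves an eigenvector principle (Lemma~\ref{le:eigenvectors}): if $\Delta(a)=0$ and $ab=\lambda b$, then $a\Delta(b)=\lambda\Delta(b)$. This forces $\Delta(\cU)\subseteq\cU$, $\Delta(\cV)\subseteq\cV$, whence $\Delta\in\Der(\cC)_{\bar 0}\oplus\FF\varphi$ with the same $\varphi$ as above. The trace obstruction is then killed by a pair that genuinely mixes $\cU$ and $\cV$: with $a=u_1-v_1$ and $b=u_2+v_3$ one has $\varphi(a)=0$ and $ab=b$, but $a\varphi(b)=(u_1-v_1)(-v_3)=-u_2\neq -v_3=\varphi(b)$, so $\varphi$ is not $2$-local. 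This is the ``feed the multiplication table'' step you were looking for; your reduction and the paper's converge once you replace $(u_1,u_2)$ by such a pair and use the eigenvector argument instead of a bare Leibniz identity.
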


The proof of the following lemma  is similar to the proof of \cite[Lemma 3.4]{AKA21}, we include it for the sake of completeness.

\begin{lemma}\label{2-locallinear}
Any $2$-local derivation on an arbitrary Cayley algebra is linear.	
\end{lemma}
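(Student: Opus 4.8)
The plan is to establish the two defining properties of linearity—additivity and homogeneity—for an arbitrary $2$-local derivation $\Delta$, using the key fact that any two points can be simultaneously fit by a single genuine derivation. First I would prove homogeneity: for a fixed $x\in\cC$ and scalar $\lambda\in\FF$, I apply the $2$-local property to the pair $x$ and $\lambda x$, obtaining a derivation $d_{x,\lambda x}$ with $\Delta(x)=d_{x,\lambda x}(x)$ and $\Delta(\lambda x)=d_{x,\lambda x}(\lambda x)=\lambda\, d_{x,\lambda x}(x)=\lambda\,\Delta(x)$, where the crucial middle equality is just the $\FF$-linearity of the genuine derivation $d_{x,\lambda x}$. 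This immediately gives $\Delta(\lambda x)=\lambda\Delta(x)$.

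The harder part is additivity, i.e.\ $\Delta(x+y)=\Delta(x)+\Delta(y)$ for all $x,y\in\cC$, since here the obvious approach of picking a derivation fitting $x$ and $y$ does not directly control $\Delta$ on the sum $x+y$. The standard technique (as in \cite[Lemma 3.4]{AKA21}) is to consider the pair consisting of $x+y$ and one of the summands. I would apply the $2$-local property to the pair $\{x+y,\,x\}$, getting a derivation $d$ with $\Delta(x+y)=d(x+y)=d(x)+d(y)$ and $\Delta(x)=d(x)$; by $\FF$-linearity of the derivation $d$ this yields $\Delta(x+y)=\Delta(x)+d(y)$. One then does the same with the pair $\{x+y,\,y\}$ to get a derivation $d'$ with $\Delta(x+y)=\Delta(y)+d'(x)$. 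The obstacle is to identify $d(y)$ with $\Delta(y)$ (equivalently $d'(x)$ with $\Delta(x)$): this requires an additional argument, typically exploiting the skew-symmetry \eqref{der-skew} of derivations relative to the norm $\nup$, to pin down these auxiliary images. Concretely, from the two relations one extracts that $\Delta(x)+\Delta(y)$ and $\Delta(x+y)$ agree after pairing against arbitrary elements under the nondegenerate form $\nup(\cdot,\cdot)$, and nondegeneracy then forces equality.

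I expect the main obstacle to be precisely this step of controlling the cross terms $d(y)$ and $d'(x)$, which cannot be read off from the $2$-local condition alone. The way around it is to bring in the norm form: since every derivation satisfies $\nup\bigl(d(z),w\bigr)+\nup\bigl(z,d(w)\bigr)=0$, one can compute $\nup\bigl(\Delta(x+y)-\Delta(x)-\Delta(y),\,w\bigr)$ for an arbitrary $w\in\cC$ by replacing each $\Delta$-value with the appropriate derivation applied to its argument and using the compatibility of these derivations with $\nup$; the terms cancel, and since $\nup(\cdot,\cdot)$ is nondegenerate, the difference vanishes, giving additivity. Combining this with the homogeneity established first shows $\Delta$ is $\FF$-linear. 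I would note that this argument uses only the general structure of Cayley algebras and their derivations, not whether $\cC$ is split or division, which is why the lemma holds for an arbitrary Cayley algebra.
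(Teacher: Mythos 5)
Your proposal is correct and follows essentially the same route as the paper: homogeneity via the pair $\{x,\lambda x\}$, and additivity via the identity $\nup\bigl(\Delta(u),w\bigr)=-\nup\bigl(u,\Delta(w)\bigr)$ (obtained from the $2$-local property together with the skew-symmetry \eqref{der-skew}), bilinearity of the form, and nondegeneracy. The preliminary detour through the pairs $\{x+y,x\}$ and $\{x+y,y\}$ is not needed and is not in the paper, but you correctly identify that it does not close on its own and pivot to the argument that does.
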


\begin{proof} Note that any $2$-local derivation $\Delta$ on $\cC$ is {\rojo `homogeneous': $\Delta(\lambda x)=\lambda\Delta(x)$ for all
$\lambda\in\FF$ and $x\in\cC$.
Indeed,  take a derivation} $d_{x, \lambda x}$ on $\cC$ such that $\Delta(x)=d_{x, \lambda x}(x)$ and $\Delta(\lambda x)=d_{x, \lambda x}(\lambda x)$. Then
\begin{align*}
\Delta(\lambda x) & = d_{x, \lambda x}(\lambda x)=
\lambda d_{x, \lambda x}(x)=\lambda \Delta(x).
\end{align*}
	
Let us show that $\Delta$ is additive.

Let $x,z\in \cC$. Take a derivation $d_{x, z}$  such that
$\Delta(x)=d_{x, z}(x)$ and $\Delta(z)=d_{x, z}(z)$.
From Equation \eqref{der-skew} we obtain
\begin{align*}
\nup(\Delta(x), z) = \nup(d_{x,z}(x), z) = -\nup(x, d_{x,z}(z)) =
-\nup(x, \Delta(z)).
\end{align*}
So,
\begin{align}\label{2-loc-skew}
\nup(\Delta(x), z) = -  \nup(x, \Delta(z)).
\end{align}
Now replacing the  element  $x$ by $x+y$ in \eqref{2-loc-skew}, we {\rojo obtain} that
	\begin{align*}
		\nup(\Delta(x+y), z) & = -\nup(x+y, \Delta(z))=
		-\nup(x, \Delta(z))-\nup(y, \Delta(z))\\
		& = \nup(\Delta(x), z)+\nup(\Delta(y),z)=
		\nup(\Delta(x)+\Delta(y), z).
	\end{align*}
	Thus
	\begin{align*}
		\nup(\Delta(x+y)-\Delta(x)-\Delta(y), z) & = 0
	\end{align*}
	for all $z\in \cC$. Since
	the bilinear  form $\nup$ is nondegenerate, it follows that
	$\Delta(x+y)=\Delta(x)+\Delta(y)$.
\end{proof}

So, any $2$-local derivation on a Cayley algebra  is linear and annihilates $1$.

\begin{corollary}\label{2local_linear}
Any $2$-local derivation on an arbitrary  Cayley algebra $\cC$ is a local derivation, and hence belongs to the orthogonal Lie algebra $\frso(\cC_0,\nup)$.
\end{corollary}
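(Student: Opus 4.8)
The plan is to read off the statement from Lemma~\ref{2-locallinear} and Theorem~\ref{th:main} with essentially no new work. First I would record the two facts about a $2$-local derivation $\Delta$ that have already been established: it is linear by Lemma~\ref{2-locallinear}, and it annihilates the unit. The latter is the remark made immediately after that lemma, and it follows by applying the defining property of a $2$-local derivation to the pair $(1,1)$: there is a derivation $d_{1,1}$ with $\Delta(1)=d_{1,1}(1)$, and every derivation kills $1$, so $\Delta(1)=0$.

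Next I would verify that $\Delta$ is a local derivation straight from the definition. Fix an arbitrary $x\in\cC$ and apply the $2$-local property to the degenerate pair $(x,x)$: this produces a derivation $d_{x,x}\in\Der(\cC)$ with $\Delta(x)=d_{x,x}(x)$. Thus for each $x$ the value $\Delta(x)$ is attained by a genuine derivation, and since $\Delta$ is already known to be linear, it satisfies exactly the definition of a local derivation. The only role played by the preceding lemma here is to supply linearity; the passage from $2$-local to local is then the trivial specialization $y=x$ in the defining condition.

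Finally I would invoke Theorem~\ref{th:main}, which identifies the space of local derivations of $\cC$ with the Lie algebra $\{d\in\frso(\cC,\nup)\mid d(1)=0\}$, together with the Remark following that theorem, which identifies this Lie algebra canonically with $\frso(\cC_0,\nup)$ in every characteristic. Combining these gives $\Delta\in\frso(\cC_0,\nup)$, which is the assertion.

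I expect no genuine obstacle to remain at this stage: the substantive content is entirely packaged in Lemma~\ref{2-locallinear} (linearity of $2$-local derivations) and in Theorem~\ref{th:main} (the description of local derivations), both of which may be assumed. I would note in passing that an alternative, equally short route to membership in the orthogonal Lie algebra avoids even the definition-chasing of the second paragraph: the proof of Lemma~\ref{2-locallinear} already derives the skew-symmetry relation \eqref{2-loc-skew}, namely $\nup(\Delta(x),z)=-\nup(x,\Delta(z))$, which says precisely that $\Delta\in\frso(\cC,\nup)$; combined with $\Delta(1)=0$ this places $\Delta$ in $\{d\in\frso(\cC,\nup)\mid d(1)=0\}$, and Theorem~\ref{th:main} then identifies this set with the space of local derivations, giving the corollary directly.
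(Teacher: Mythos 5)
Your proposal is correct and matches the paper's (implicit) argument exactly: the paper gives no separate proof of the corollary, relying on the sentence ``any $2$-local derivation is linear and annihilates $1$'' (Lemma~\ref{2-locallinear}), the specialization $y=x$ in the defining property, and Theorem~\ref{th:main} with its following Remark, which is precisely your chain of reasoning. Your closing observation that relation~\eqref{2-loc-skew} already places $\Delta$ in $\frso(\cC,\nup)$ directly is a valid and slightly more economical variant, but not a genuinely different route.
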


\begin{lemma}\label{le:eigenvectors}
Let $\Delta$ be a $2$-local derivation on $\cC$. If $a,b\in\cC$ and $\lambda \in\FF$ satisfy $\Delta(a)=0$, $ab=\lambda b$ (respectively $ba=\lambda b$), then $a\Delta(b)=\lambda \Delta(b)$ (respectively $\Delta(b)a=\lambda\Delta(b)$.
\end{lemma}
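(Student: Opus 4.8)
The plan is to exploit the defining property of a $2$-local derivation directly, using the hypothesis $\Delta(a)=0$. Given the element $b$ (and the fixed $a$), I would apply the $2$-local property to the \emph{pair} $\{a,b\}$: there exists a derivation $d_{a,b}\in\Der(\cC)$ with $\Delta(a)=d_{a,b}(a)$ and $\Delta(b)=d_{a,b}(b)$. The first equation says $d_{a,b}(a)=\Delta(a)=0$, so $a$ lies in the kernel of this particular derivation. The whole point is that $\Delta(b)=d_{a,b}(b)$, so it suffices to show that \emph{any} derivation $d$ killing $a$ satisfies $a\,d(b)=\lambda\,d(b)$ whenever $ab=\lambda b$.

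The key step is then a one-line computation with the derivation property. Applying $d=d_{a,b}$ to the identity $ab=\lambda b$ gives
\begin{align*}
d(ab)=d(a)b+a\,d(b)=0\cdot b+a\,d(b)=a\,d(b),
\end{align*}
while differentiating the right-hand side gives $d(\lambda b)=\lambda\,d(b)$. Equating the two yields $a\,d(b)=\lambda\,d(b)$, and since $d(b)=\Delta(b)$ this is exactly $a\,\Delta(b)=\lambda\,\Delta(b)$. The respective case $ba=\lambda b$ is handled symmetrically: $d(ba)=d(b)a+b\,d(a)=d(b)a=\Delta(b)a$ and $d(\lambda b)=\lambda\Delta(b)$, giving $\Delta(b)a=\lambda\Delta(b)$.

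I do not anticipate a genuine obstacle here: the statement is essentially a formal consequence of the Leibniz rule once one recognizes that the derivation realizing $\Delta$ on the pair $\{a,b\}$ annihilates $a$. The only thing to be careful about is that $d_{a,b}$ depends on the pair, so the argument must be run with a single derivation witnessing both values $\Delta(a)$ and $\Delta(b)$ simultaneously—which is precisely what the $2$-local definition provides, and is the reason the statement is phrased for $2$-local derivations rather than merely local ones. No linearity or continuity of $\Delta$ is needed for this particular lemma, though by Lemma~\ref{2-locallinear} we already know $\Delta$ is linear in any case.
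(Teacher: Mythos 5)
Your proposal is correct and is essentially identical to the paper's proof: both apply the single derivation $d_{a,b}$ witnessing $\Delta$ on the pair $\{a,b\}$ to the identity $ab=\lambda b$ and use the Leibniz rule together with $d_{a,b}(a)=\Delta(a)=0$. Nothing further is needed.
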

\begin{proof}
Let $d_{a,b}$ be a derivation on $\cC$ which coincides with $\Delta$ on $a$ and $b$. Then,
if $ab=\lambda b$ (the other case is similar) we get
{\rojo
\begin{multline*}
\lambda\Delta(b)  =\lambda d_{a,b}(b)=d_{a,b}(\lambda b)=
d_{a,b}(ab)\\
=  d_{a,b}(a)b+ad_{a,b}(b)=\Delta(a)b+a\Delta(b)=a\Delta(b),
\end{multline*}}
as desired.
\end{proof}

\begin{proof}[Proof of Theorem \ref{2localsplit}.]
Let $\cC$ be the split Cayley algebra over the field $\FF$. Consider the $\ZZ_3$-grading $\cC=(\FF e_1+\FF e_2)\bigoplus \cU\bigoplus \cV$, with the notations of Section 2. If $\Delta$ is a $2$-local derivation, let $d\in\Der(\cC)$ such that
$d(e_1)=\Delta(e_1)$. Substituting $\Delta$ by $\Delta-d$ we may assume
$\Delta(e_1)=0$, and hence $\Delta(e_2)=\Delta(1-e_1)=0$ too.

Since $\cU=\{x\in\cC\mid e_1x=x,\ xe_1=0\}$, Lemma \ref{le:eigenvectors} shows that $\Delta(\cU)\subseteq \cU$. In the same vein we get $\Delta(\cV)\subseteq \cV$.

Hence $\Delta\in\{f\in\frso(\cU\oplus \cV,\nup)\mid f(\cU)\subseteq \cU,\ f(\cV)\subseteq \cV\}$, where $\frso(\cU\oplus \cV,\nup)$ is identified with the subspace of the endomorphisms
in $\frso(\cC,\nup)$ that annihilate $\FF e_1\oplus\FF e_2$.

Since $\cU$ and $\cV$ are dual relative to $\nup$, the elements of
$\{f\in\frso(\cU\oplus \cV,\nup)\mid f(\cU)\subseteq \cU,\ f(\cV)\subseteq \cV\}$ are determined by its restriction to $\cU$. Actually, if $\{u_1,u_2,u_3\}$ and $\{v_1,v_2,v_3\}$ are dual bases in $\cU$ and $\cV$, then in these bases, the matrix of the restriction of $f$ to $\cV$ is
minus the transpose of the matrix of the restriction of $f$ to $\cU$. Note that
$\{e_1,e_2,u_1,u_2,u_3,v_1,v_2,v_3\}$ is a canonical basis of $\cC$.

However, relative to the $\ZZ_3$-grading above, $\Der(\cC)\subo$ is the subspace of the derivations that preserve $\cU$ and $\cV$ and annihilate $\FF e_1\oplus \FF e_2$, and this consists of the elements above with the extra condition of the trace of the restriction of $f$ to $\cU$ being $0$. Thus we have
\[
\{f\in\frso(\cU\oplus \cV,\nup)\mid f(\cU)\subseteq \cU,\ f(\cV)\subseteq \cV\}
=\Der(\cC)\subo\oplus \FF\varphi,
\]
where $\varphi(u_3)=u_3$, $\varphi(v_3)=-v_3$, and $\varphi(e_i)=\varphi(u_i)=\varphi(v_i)=0$ for $i=1,2$.

Therefore, it is enough to prove that $\varphi$ is not a $2$-local derivation.
For this, take $a=u_1-v_1$, so that $\varphi(a)=0$, and $b=u_2+v_3$. Then
$ab=(u_1-v_1)(u_2+v_3)=v_3+u_2=b$. Now, according to Lemma \ref{le:eigenvectors},
if $\varphi$ {\rojo were} a $2$-local derivation, we would get
$a\varphi(b)=\varphi(b)$. But $\varphi(b)=\varphi(u_2+v_3)=-v_3$, so that
$a\varphi(b)=(u_1-v_1)(-v_3)=-u_2\neq \varphi(b)$, a contradiction.
\end{proof}

Since any Cayley algebra over an algebraically closed field is split, Theorem~\ref{2localsplit} implies the following result (see \cite[Theorem 1.2]{AKA21}).

\begin{corollary}
Any $2$-local derivation of a Cayley algebra $\cC$ over an algebraically closed field is a derivation.
\end{corollary}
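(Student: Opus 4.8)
The plan is to reduce the statement entirely to Theorem~\ref{2localsplit} by verifying that over an algebraically closed field every Cayley algebra is split. Recall from the beginning of Section~4 (and Section~2) that a Cayley algebra $\cC$ is the split algebra precisely when its norm $\nup$ is isotropic, and is otherwise a division algebra. Thus the entire task is to show that the eight-dimensional norm form of $\cC$ is isotropic whenever $\FF$ is algebraically closed.

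To exhibit a nonzero isotropic vector uniformly in all characteristics, I would pick any $x\in\cC\setminus\FF 1$ and exploit the degree-two equation~\eqref{xsqn}: the minimal polynomial of $x$ is $p(X)=X^2-t(x)X+\nup(x)$, which splits over the algebraically closed field $\FF$, say with a root $\lambda$. Setting $y=x-\lambda 1$ and expanding by means of the polar form together with $\nup(1)=1$, one gets
\[
\nup(y)=\nup(x)-\lambda\,\nup(x,1)+\lambda^2=\lambda^2-t(x)\lambda+\nup(x)=0,
\]
the last equality holding because $\lambda$ is a root of $p$. Since $x\notin\FF 1$, the element $y$ is nonzero, so $\nup$ is isotropic and $\cC$ is the split Cayley algebra.

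With $\cC$ identified as split, Theorem~\ref{2localsplit} applies verbatim, and every $2$-local derivation of $\cC$ is a derivation, completing the proof. There is no genuine obstacle here, since all the substantive work resides in Theorem~\ref{2localsplit}; the only point requiring minor care is the isotropy claim in characteristic $2$, and for this the minimal-polynomial argument above is preferable to invoking the classification of quadratic forms, as it is characteristic-free and self-contained.
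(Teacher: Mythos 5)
Your proof is correct and follows essentially the same route as the paper, which simply notes that every Cayley algebra over an algebraically closed field is split and then invokes Theorem~\ref{2localsplit}. Your explicit verification of isotropy via the minimal polynomial (computing $\nup(x-\lambda 1)=p(\lambda)=0$) is a valid, characteristic-free way to justify the splitness that the paper takes as known.
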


\smallskip

\subsection{2-Local derivations on division Cayley algebras}

\

The situation for Cayley division algebras is completely different.
In order to deal with it, some preliminaries are needed.

Let $\cC$ be the split Cayley algebra over an arbitrary field, and take a canonical basis
$\{e_1,e_2,u_1,u_2,u_3,v_1,v_2,v_3\}$ as in Table~\ref{table:1}.

\begin{lemma}\label{le:der_split}
	Let $\cC$ be the split Cayley algebra over an arbitrary field $\FF$. Consider the
	following subalgebras of $\Der(\cC)$:
	\[
	\cL=\{d\in\Der(\cC)\mid d(e_1)=d(e_2)=0\},\quad
	\cL'=\{d\in\Der(\cC)\mid d(u_1+v_1)=0\}.
	\]
	Then the dimension of each of the following subspaces:
$\cL(u_1+v_1)$,
	$\cL'(u_1+\mu v_1)$ with $1\neq \mu\in\FF$, and $\cL'(u_2+v_2)$,  is always $5$.
\end{lemma}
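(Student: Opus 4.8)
The plan is to compute each of the three dimensions explicitly using the coordinate form of derivations given in Equation \eqref{eq:derivation_matrix}. The whole lemma is a linear-algebra computation: for a fixed target vector $w$, the map $d\mapsto d(w)$ from the relevant subalgebra to $\cC$ is linear, so $\dim\bigl(\cL(w)\bigr)=\dim\cL-\dim\{d\in\cL\mid d(w)=0\}$, and similarly for $\cL'$. First I would pin down the two subalgebras concretely in the parametrization $(\alpha,\beta,\gamma,\alpha',\beta',\gamma',a_{11},\dots,a_{33})$ with $a_{11}+a_{22}+a_{33}=0$: a general derivation depends on $6+8=14$ parameters. Imposing $d(e_1)=d(e_2)=0$ forces the first two columns/rows of \eqref{eq:derivation_matrix} to vanish, i.e. $\alpha=\beta=\gamma=\alpha'=\beta'=\gamma'=0$, so $\cL$ is the $8$-dimensional space of trace-zero $3\times 3$ matrices $(a_{ij})$ acting in the natural way on $\cU$ and by minus the transpose on $\cV$. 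This recovers the description $\Der(\cC)\subo$ from the proof of Theorem~\ref{2localsplit}.

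Next I would read off the three images. For $\cL(u_1+v_1)$: from \eqref{eq:derivation_matrix} one has $d(u_1)=\sum_i a_{i1}(\text{$i$th }u)$ placed in the $\cU$-block and $d(v_1)=-\sum_i a_{1i}(\text{$i$th }v)$ in the $\cV$-block, so $d(u_1+v_1)$ has $\cU$-component $(a_{11},a_{21},a_{31})$ and $\cV$-component $(-a_{11},-a_{12},-a_{13})$. I would then determine the kernel $\{d\in\cL\mid d(u_1+v_1)=0\}$: this forces $a_{11}=a_{21}=a_{31}=0$ and $a_{12}=a_{13}=0$, i.e. the first row and first column of $(a_{ij})$ vanish; combined with the trace condition $a_{22}+a_{33}=0$ this leaves $a_{22},a_{23},a_{32}$ with one relation, a $3$-dimensional kernel. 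Hence $\dim\cL(u_1+v_1)=8-3=5$, as claimed. For the two $\cL'$-cases I would first describe $\cL'=\{d\mid d(u_1+v_1)=0\}$ as a subalgebra of the full $14$-dimensional $\Der(\cC)$ by writing out the $u_1+v_1$ column of \eqref{eq:derivation_matrix}: the vanishing of $d(u_1+v_1)$ gives a system of linear equations in all $14$ parameters, whose solution space has some dimension $n$; then for each target $w\in\{u_1+\mu v_1,\ u_2+v_2\}$ I would compute the kernel of $d\mapsto d(w)$ inside $\cL'$ and conclude $\dim\cL'(w)=n-\dim(\text{kernel})=5$ in each case.

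The key observation that makes the $\cL'$-computations uniform is that $u_1+v_1$ is (up to scalar) the unit-norm idempotent direction; I expect $\cL'$ to be conjugate to a copy of $\Der$ of a quaternion subalgebra plus extra pieces, and the three chosen targets $u_1+v_1$, $u_1+\mu v_1$ ($\mu\neq1$), $u_2+v_2$ to represent the three genuinely distinct orbit-types of a vector relative to the stabilizer of $u_1+v_1$ — which is exactly why they will be needed later to exhaust the behavior of a $2$-local derivation on a division algebra. The main obstacle, and the only place real care is needed, is the $\cL'$ computation: solving $d(u_1+v_1)=0$ mixes the $(\alpha,\dots,\gamma')$ parameters with the $a_{ij}$'s through the off-diagonal entries of \eqref{eq:derivation_matrix}, so the linear system is not block-diagonal and one must track cancellations carefully to get the kernel dimensions right. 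I would organize this by writing the single $14\times 8$ (for $\cL$) or larger matrix of the evaluation map and computing its rank directly, double-checking each of the three ranks equals $5$; since the target vectors differ only in a few coordinates, the three rank computations share most of their rows and can be done together. No deeper structural input is required beyond \eqref{eq:derivation_matrix} and the trace condition $a_{11}+a_{22}+a_{33}=0$.
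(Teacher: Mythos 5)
Your proposal is correct and follows essentially the same route as the paper: both proofs are direct linear-algebra computations reading off the relevant images from the coordinate matrix \eqref{eq:derivation_matrix} (the paper simply records the three resulting $5$-dimensional spans, while you organize the same computation via rank--nullity). Your explicit check for $\cL(u_1+v_1)$ ($8$-dimensional $\cL$, $3$-dimensional kernel) matches the paper's span $\espan\langle u_1-v_1,u_2,u_3,v_2,v_3\rangle$, and the outlined $\cL'$ computations (where $d(u_1+v_1)=0$ imposes six independent conditions, so $\dim\cL'=8$) go through as described.
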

\begin{proof}
	These subspaces are easily computed using Equation \eqref{eq:derivation_matrix}, obtaining:
	\begin{itemize}
		\item $\cL(u_1+v_1)=\espan\langle u_1-v_1,u_2,u_3,v_2,v_3\rangle$,
		\item $\cL'(u_1+\mu v_1)=\espan\langle e_1-e_2,u_2,u_3,v_2,v_3\rangle$ ($\mu\neq 1$), and
		\item $\cL'(u_2+v_2)=\espan\langle e_1-e_2,u_1+v_1,u_2-v_2,u_3,v_3\rangle$.
	\end{itemize}
	The result follows.
\end{proof}

\begin{corollary}\label{co:division}
	Let $\cC$ be a Cayley division algebra over an arbitrary field. Let $x,y\in\cC$ be elements such that $1$, $x$, and $y$, are linearly independent. Consider the following subalgebra of $\Der(\cC)$: $\cS=\{d\in\Der(\cC)\mid d(x)=0\}$.
	Assume that either:
	\begin{itemize}
		\item The restriction of the norm $\nup$ to $\cK=\FF 1+\FF x$ is nonsingular  (that is, $\cK$ is a two-dimensional composition subalgebra, and note that this is always the case if the characteristic of $\FF$ is not $2$), or
		\item The characteristic of $\FF$ is $2$ and $\nup(1,x)=0=\nup(1,y)$.
	\end{itemize}
	Then the subspace $\cS y$ equals $(\FF 1+\FF x+\FF y)^\perp$.
\end{corollary}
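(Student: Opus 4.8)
The plan is to reduce the statement for a Cayley \emph{division} algebra to the corresponding computation over the split algebra, which is exactly what Lemma~\ref{le:der_split} provides. The key observation is that the subalgebra $\cS=\{d\in\Der(\cC)\mid d(x)=0\}$ and the subspace $\cS y$ are defined purely in terms of the derivation algebra and its action, and these are preserved under extension of scalars. So first I would extend scalars to an algebraic closure $\ol\FF$, under which $\cC\otimes_\FF\ol\FF$ becomes the split Cayley algebra (every Cayley algebra over an algebraically closed field is split). Since $1$, $x$, $y$ remain linearly independent after base change, and $(\FF 1+\FF x+\FF y)^\perp\otimes\ol\FF=(\ol\FF 1+\ol\FF x+\ol\FF y)^\perp$, while $\Der(\cC)\otimes\ol\FF=\Der(\cC\otimes\ol\FF)$ and the formation of $\cS$ and $\cS y$ commutes with this base change, it suffices to verify the equality $\cS y=(\ol\FF 1+\ol\FF x+\ol\FF y)^\perp$ over the split algebra.

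Once over the split algebra, the strategy is to use the classification of elements via Lemma~\ref{le:orbits}: apply an automorphism of $\cC\otimes\ol\FF$ to move $x$ into a canonical form, reducing to the concrete cases computed in Lemma~\ref{le:der_split}. The two hypotheses in the Corollary correspond to the two genuinely different situations. In the first case (with $\cK=\FF 1+\FF x$ nonsingular), $x$ has separable minimal polynomial or can be normalized so that $\cK$ is a split two-dimensional composition algebra; after an automorphism I can arrange $x$ to be (a scalar combination of) $e_1,e_2$, so that $\cS=\cL=\{d\mid d(e_1)=d(e_2)=0\}$ in the notation of Lemma~\ref{le:der_split}. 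Then $y$, being independent from $1$ and $x$, can after a further automorphism fixing $e_1,e_2$ be brought into a form where $\cS y$ matches one of the explicit $5$-dimensional spaces $\cL(u_1+v_1)$ or $\cL(u_2+v_2)$ computed there. In the second case (characteristic $2$, $\nup(1,x)=0=\nup(1,y)$), $x$ is conjugate to $u_1+v_1$ (both are trace-zero norm-one elements generating a nonsingular plane with $1$), so $\cS=\cL'$, and $y$ is brought to $u_1+\mu v_1$ or $u_2+v_2$, again landing in the cases of Lemma~\ref{le:der_split}.

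In every case Lemma~\ref{le:der_split} tells me $\dim(\cS y)=5$, and a direct inspection of the listed spanning sets shows each equals $(\ol\FF 1+\ol\FF x+\ol\FF y)^\perp$: for instance $\espan\langle u_1-v_1,u_2,u_3,v_2,v_3\rangle$ is precisely the orthogonal complement of $\espan\langle 1,e_1+e_2,u_1+v_1\rangle=\espan\langle e_1,e_2,u_1+v_1\rangle$ under $\nup$, and similarly for the others. The containment $\cS y\subseteq(\ol\FF 1+\ol\FF x+\ol\FF y)^\perp$ is immediate and needs no computation: $\cS y\subseteq\Der(\cC)y\subseteq(\ol\FF 1+\ol\FF y)^\perp$ by the skew-symmetry \eqref{der-skew} together with $d(1)=0$, and moreover $\nup(x,d(y))=-\nup(d(x),y)=0$ for $d\in\cS$ gives orthogonality to $x$ as well; hence the real content is the reverse inclusion, delivered by the dimension count of $5=\dim(\ol\FF 1+\ol\FF x+\ol\FF y)^\perp$.

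The main obstacle I anticipate is the bookkeeping needed to confirm that \emph{every} admissible triple $(1,x,y)$ under the two hypotheses can indeed be transported, by an automorphism of the split algebra, into one of the three canonical configurations of Lemma~\ref{le:der_split} — in particular, ensuring in the second (characteristic $2$) case that the joint orthogonality conditions $\nup(1,x)=0=\nup(1,y)$ are compatible with the normalization of $y$ relative to the already-normalized $x$, and that the excluded value $\mu=1$ (which would force $y$ into $\FF 1+\FF x$, contradicting independence) never arises. Verifying that the orbit analysis of Lemma~\ref{le:orbits} covers these configurations uniformly, rather than requiring further subcase splitting, is where the argument demands care.
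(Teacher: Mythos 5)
Your overall strategy --- extend scalars to $\overline{\FF}$, observe that $\cS y\subseteq(\FF 1+\FF x+\FF y)^\perp$ with the right-hand side of dimension $5$, and then verify $\dim \cS y=5$ by transporting the pair $(x,y)$ by an automorphism of the split algebra into one of the configurations of Lemma~\ref{le:der_split} --- is exactly the paper's. But the step you defer as ``the main obstacle'' is the entire substance of the proof, and it is not delivered by Lemma~\ref{le:orbits}, which normalizes a \emph{single} element: what is needed is a \emph{simultaneous} normalization of $x$ and $y$, and this genuinely requires further subcase splitting and explicit Cayley--Dickson constructions. In the first case one replaces $y$ by its component $y'$ in $\cK^\perp$ (harmless, since $\cS$ kills $1$ and $x$, so $\cS y=\cS y'$), uses anisotropy to get $\nup(y')\neq 0$, and builds an automorphism fixing $\overline{\cK}=\overline{\FF}e_1\oplus\overline{\FF}e_2$ pointwise and carrying $y'$ to $\alpha(u_1+v_1)$; only the configuration $\cL(u_1+v_1)$ is needed here, and your ``$\cL(u_2+v_2)$'' is not one of the three subspaces the lemma actually computes.

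In the characteristic $2$ case the real difficulty is that $\FF 1+\FF x$ is \emph{not} a composition subalgebra (the polar form vanishes identically on it, since $\nup(1,1)=2\nup(1)=0$), so it cannot be split into idempotents and your ``$x$ is conjugate to $u_1+v_1$, so $\cS=\cL'$'' does not by itself control where $y$ goes. The paper branches on $\nup(x,y)$: if $\nup(x,y)\neq 0$ it takes $\cK=\FF 1+\FF yx$, which \emph{is} a composition subalgebra, checks $y\in\cK x$, and normalizes $x\mapsto\alpha(u_1+v_1)$, forcing $y$ into $\overline{\FF}(u_1+\mu v_1)$ with $\mu\neq 0,1$; if $\nup(x,y)=0$ it picks $u\in(\FF x+\FF y)^\perp\setminus(\FF 1)^\perp$, forms the quaternion subalgebra $\cQ=\cK\oplus\cK x$ with $\cK=\FF 1+\FF u$, replaces $y$ by $y-\alpha x\in\cQ^\perp$ (again without changing $\cS y$), and lands in the configuration $\cL'(u_2+v_2)$. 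None of these constructions appears in your proposal, and your hope that the orbit analysis ``covers these configurations uniformly, rather than requiring further subcase splitting'' is not realized: the splitting on $\nup(x,y)$ is unavoidable. Until these normalizations are supplied, the proof is incomplete.
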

\begin{proof}
	Note first that since $\Der(\cC)$ is contained in the orthogonal Lie algebra
	$\frso(\cC,\nup)$, we always have $\cS y\subseteq (\FF 1+\FF x+\FF y)^\perp$, and this is a subspace of dimension $5$ because $1$, $x$ and $y$ are linearly independent. Therefore, it is enough to check that the dimension of $\cS y$ is $5$.
	Also, in order to check this, we may extend scalars to an algebraic closure
	$\overline{\FF}$.
	
	Assume first that $\cK=\FF 1+\FF x$ is a composition subalgebra of $\cC$, then $y=a+y'$ with $a\in\cK$ and $0\neq y'\in \cK^\perp$. Since $\cC$ is a division algebra,
	$\nup$ is anisotropic, so $\nup(y')\neq 0$. Note that $\cS y=\cS y'$, because $\cS$ annihilates both $1$ and $x$. Hence we may assume $y\in\cK^\perp$. Extending scalars to $\overline{\FF}$, $\cK\otimes_\FF\overline{\FF}$ is a two-dimensional composition
	subalgebra, and hence it has two orthogonal idempotents:
	$\overline{\cK}\bydef\cK\otimes_\FF\overline{\FF}=\overline{\FF}e_1\oplus\overline{\FF}e_2$. These two idempotents can be completed to a canonical basis
	$\{e_1,e_2,u_1,u_2,u_3,v_1,v_2,v_3\}$ of
	$\overline{\cC}\bydef\cC\otimes_\FF\overline{\FF}$. Denote again by $\nup$ the norm in $\overline{\cC}$.
	If $\nup(y)=\mu$ and we pick a square root $\alpha\in\overline{\FF}$ of $\mu$, then
	$\nup(y)=\nup\bigl(\alpha(u_1+v_1)\bigr)$ and, using the Cayley-Dickson doubling
	process we can easily define an automorphism $\varphi$ of $\overline{\cC}$ that
	is the identity on $\overline{\cK}$ and satisfies
	$\varphi(y\otimes 1)=\alpha(u_1+v_1)$. Then
	$\varphi(\cS\otimes_\FF\overline{\FF})\varphi^{-1}
	=\{d\in\Der(\overline{\cC})\mid d(e_1)=0=d(e_2)\}$, and the result follows from
	Lemma \ref{le:der_split}.
	
	Assume now that the characteristic is $2$ and $\nup(1,x)=0=\nup(1,y)$. In particular $\bar x=-x=x$, and we have two different possibilities depending on whether
	$\nup(x,y)$ is $0$ or not.
	
	If $\nup(x,y)\neq 0$, then $\nup(1,yx)=\nup(\bar y x,1)=\nup(y,x)\neq 0$, and hence $\cK=\FF 1+\FF yx$ is a composition subalgebra of $\cC$. Moreover,
	$\nup(yx,x)=\nup(y,x^2)=0$, as $x^2=-\nup(x)1$, and $(yx)x=\nup(x)y$, so
	$y\in \cK x$. Extending scalars as in the previous case, it turns out that there is
	an automorphism $\varphi$ of $\overline{\cC}$ that is the identity on
	$\overline{\cK}$ and such that $\varphi(x\otimes 1)=\alpha(u_1+v_1)$ for some
	$0\neq \alpha\in\overline{\FF}$. Since $y$ is in $\cK x$, we get
	$\varphi(y\otimes 1)\in(\overline{\FF}e_1+\overline{\FF}e_2)(u_1+v_1)=
	\overline{\FF}u_1+ \overline{\FF}v_1$. Then we have $\varphi(y\otimes 1)\in \overline{\FF}(u_1+\mu v_1)$, with $\mu$ different from $0$, as $\nup(y)\neq 0$ because $\nup$ is anisotropic, and different from $1$, as $x$ and $y$ are linearly independent. In this case
	$\varphi(\cS\otimes_\FF\overline{\FF})\varphi^{-1}
	=\{d\in\Der(\overline{\cC})\mid d(u_1+v_1)=0\}$, and the result follows from
	Lemma \ref{le:der_split}.
	
	Finally, if $\nup(x,y)=0$, pick an element
	$u\in (\FF x+\FF y)^\perp\setminus (\FF 1)^\perp$ (remember that $1$, $x$, and $y$
	are linearly independent). Then $\nup(1,u)\neq 0$, so $\cK=\FF 1+\FF u$ is
	a composition subalgebra of $\cC$. Here $\nup(x,\cK)=0=\nup(y,\cK)$. Consider the quaternion subalgebra $\cQ=\cK\oplus\cK x$.
	As $y$ is orthogonal to $\cK$ and to $x$, it belongs to $(\cK+\FF x)^\perp=\FF x+\cQ^\perp$, hence there exists a scalar $\alpha\in\FF$ such that
	$y-\alpha x\in\cQ^\perp$. But $\cS y=\cS(y-\alpha x)$, so we may assume
	$y\in\cQ^\perp$.
	Extending scalars and using the Cayley-Dickson doubling process,
 we can easily define an automorphism $\varphi$ of $\overline{\cC}$ such that
	$\varphi(x)\in\overline{\FF}(u_1+v_1)$ and $\varphi(y)\in\overline{\FF}(u_2+v_2)$.
	Again we get $\varphi(\cS\otimes_\FF\overline{\FF})\varphi^{-1}
	=\{d\in\Der(\overline{\cC})\mid d(u_1+v_1)=0\}$, and the result follows from
	Lemma \ref{le:der_split}.
\end{proof}

\begin{remark}
	If $\cC$ is a Cayley algebra and $\cK$ is a two-dimensional composition subalgebra, the subalgebra $\{d\in\Der(\cC)\mid d(\cK)=0\}$ is a special unitary Lie algebra.
	(See \cite[\S 5]{BDE} and references therein.)
\end{remark}

Now we are ready to tackle the case of $2$-local derivations on  Cayley division
algebras.

The following {\rojo result, combined with Corollary~\ref{2local_linear}, shows that for Cayley division algebras the notions of local} and
$2$-local derivations coincide.

\begin{theorem}\label{th:2local_division}
	Let $\cC$ be a Cayley division algebra over an arbitrary field $\FF$. Then any
	local derivation of $\cC$ is a $2$-local derivation. As a consequence, the space of $2$-local
	derivations coincide with
	$\{d\in\frso(\cC,\nup)\mid d(1)=0\}\simeq\frso(\cC_0,\nup)$.
\end{theorem}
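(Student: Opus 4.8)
The plan is to prove the one nontrivial inclusion, namely that every local derivation $\Delta$ of $\cC$ is a $2$-local derivation; the reverse inclusion and the identification of the resulting space with $\{d\in\frso(\cC,\nup)\mid d(1)=0\}\simeq\frso(\cC_0,\nup)$ are then immediate from Corollary~\ref{2local_linear}, Theorem~\ref{th:main}, and the Remark following it. So I would fix a local derivation $\Delta$; by Theorem~\ref{th:main} it is linear, lies in $\frso(\cC,\nup)$, and satisfies $\Delta(1)=0$. Given $x,y\in\cC$, the goal is to produce a single derivation $d$ with $d(x)=\Delta(x)$ and $d(y)=\Delta(y)$. First I would dispose of the case where $1,x,y$ are linearly dependent: since $\Delta$ is linear and annihilates $1$, matching $\Delta$ at $x$ and $y$ amounts to matching it on $W:=\FF 1+\FF x+\FF y$, and when $\dim W\le 2$ one may pick $z\in\{x,y\}$ with $W=\FF 1+\FF z$. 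A derivation $d$ with $d(z)=\Delta(z)$ exists because $\Delta$ is local, and since $d$ and $\Delta$ are linear and kill $1$, they agree on all of $W$, hence at $x$ and $y$.

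The heart of the argument is the case where $1,x,y$ are linearly independent, which I would handle by a two-step matching. Choose a derivation $d_1$ with $d_1(x)=\Delta(x)$ (available since $\Delta$ is local) and set $\delta:=\Delta-d_1$. Then $\delta$ again lies in $\frso(\cC,\nup)$ and kills both $1$ and $x$. Using the skew-symmetry relation \eqref{der-skew} together with $\delta(1)=0=\delta(x)$ and $\nup\bigl(y,\delta(y)\bigr)=0$, one checks that $\delta(y)$ is orthogonal to each of $1$, $x$, $y$, i.e. $\delta(y)\in(\FF 1+\FF x+\FF y)^\perp$. Writing $\cS=\{d\in\Der(\cC)\mid d(x)=0\}$, Corollary~\ref{co:division} identifies this orthogonal space with $\cS y$, so there is $d_2\in\cS$ with $d_2(y)=\delta(y)$. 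Then $d:=d_1+d_2$ is a derivation with $d(x)=d_1(x)=\Delta(x)$ and $d(y)=d_1(y)+\delta(y)=\Delta(y)$, as required.

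The one delicate point, and the main obstacle, is verifying the hypotheses of Corollary~\ref{co:division}, which splits according to the characteristic. If $\chr\FF\neq 2$, anisotropy of $\nup$ forces its polar form to be nondegenerate on every subspace, so $\cK=\FF 1+\FF x$ is automatically a two-dimensional composition subalgebra and the first alternative of the corollary applies verbatim. If $\chr\FF=2$, then $1\in\cC_0$ and the polar form may vanish on $\cK$, so I would first exploit the freedom to replace the basis $\{1,x,y\}$ of $W$: matching $\Delta$ at $x,y$ is equivalent to matching it at the elements of any basis $\{1,x',y'\}$ of $W$. If some $w\in W$ has $\nup(1,w)\neq 0$, take $x'=w$, so that the Gram matrix of the polar form on $\FF 1+\FF x'$ is nonsingular and the first alternative applies; otherwise $W\subseteq\cC_0$, whence $\nup(1,x)=\nup(1,y)=0$ and the second alternative applies directly. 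Running the two-step matching above with this adapted basis then completes the proof.
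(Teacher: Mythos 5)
Your proof is correct and follows essentially the same route as the paper's: reduce to the case where $1,x,y$ are linearly independent, subtract a derivation matching $\Delta$ at $x$, observe that the difference $\delta$ lies in $\frso(\cC,\nup)$ and hence sends $y$ into $(\FF 1+\FF x+\FF y)^\perp$, and invoke Corollary~\ref{co:division} to realize $\delta(y)$ as $d_2(y)$ for some $d_2$ killing $x$. Your extra step of verifying the hypotheses of Corollary~\ref{co:division} in characteristic $2$ --- replacing the basis of $W=\FF 1+\FF x+\FF y$ by one in which either $\FF 1+\FF x'$ is a composition subalgebra or $W\subseteq\cC_0$ --- is a detail the paper's proof passes over in silence, and it is a worthwhile supplement rather than a deviation.
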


\begin{proof}
	Take an element $\Delta\in\frso(\cC,\nup)$ with $\Delta(1)=0$, and take two elements
	$x,y\in\cC$. We must show that there is a derivation $d\in\Der(\cC)$ such that
	$\Delta(x)=d(x)$ and $\Delta(y)=d(y)$. In other words, $\Delta$ coincides with $d$
	on $\FF 1+\FF x+\FF y$. If $x\in\FF 1$ this is trivial, as $\Delta(1)=d(1)=0$ and $\Delta$ is a local derivation. If $y\in\FF 1+\FF x$ this is trivial too. Hence assume $1$, $x$, and $y$ are linearly independent.
	
	Since $\Delta$ is a local derivation, there is a derivation $d'\in\Der(\cC)$
	such that $\Delta(x)=d'(x)$, and hence $\Delta'=\Delta-d'$ annihilates $1$ and $x$.
	As $\Delta'$ is in $\frso(\cC,\nup)$, $\Delta'(y)$ lies in $(\FF 1+\FF x+\FF y)^\perp$,
	so Corollary \ref{co:division} shows the existence of a derivation $d\in\Der(\cC)$
	with $d(x)=0$ such that $\Delta'(y)=d(y)$. Hence $\Delta$ and the derivation
	$d'+d$ coincide on $x$ and $y$, as desired.
\end{proof}

\begin{remark}
Since $\twoLocDer\left(\cC\right)=\LocDer\left(\cC\right)\cong \frso(\cC_0, \nup)$,  it follows that any  division Cayley algebra $\cC$ admits $2$-local derivations which are not derivations.
\end{remark}

\

\section{Local and $2$-local derivations on $7$-dimensional simple Malcev algebras}

An algebra $\cA$ over a field $\FF$ is called a Malcev algebra if its multiplication is anticommutative and satisfies the following identity:
$$
{\displaystyle (xy)(xz)=((xy)z)x+((yz)x)x+((zx)x)y.}
$$
Let $\cC$ be a Cayley algebra over a field $\FF$ of {\rojo characteristic
$\neq 2$ and define a bracket $[\cdot, \cdot]$ by
$$
[x, y]=\frac{1}{2}(xy-yx),\, x, y\in \cC.
$$
Then the space $\cC_0^{-}=\left\{x\in \cC: t(x)=0\right\}$  with the bracket $[\cdot, \cdot]$,  is a simple Malcev algebra (see, e.g., \cite[Theorem 4.10]{EM94}).}

{\rojo If the characteristic of the ground field $\FF$ is $\neq 2,3$,
$\cC_0^-$ is a
central simple non-Lie Malcev algebra and, conversely, any such algebra is obtained, up to
isomorphism, in this way. (See \cite{Filippov_1}.)

However, if the characteristic of $\FF$ is $3$, then any simple Malcev algebra over
$\FF$ is a Lie algebra (\cite{Filippov_2}). In this case, $\cC_0^-$
is a central simple Lie algebra of type $A_2$ (i.e., a twisted form of
$\frpsl_3(\FF)$) and, conversely, any central simple Lie algebra of type $A_2$ is
obtained in this way. (See \cite{Alberca_et_al} or \cite[Theorem 4.26]{EKmon}.)}

It is known \cite[Chapter IV, Lemma 6.1]{EM94} that  every derivation   on a Cayley
algebra $\cC$ defines a derivation on $\cC_0^{-}$, and conversely.
More precisely, if  $d$ is a derivation on $\cC$, then the restriction
$d\vert_{\cC_0^{-}}$ is  a derivation on $\cC_0^{-}$. Conversely, if $d$ is a derivation on $\cC_0^{-}$,
then its extension to  $\cC$ {\rojo defined}
as
$$
d(\lambda 1+x)=d(x), \, \lambda\in \FF, \, x\in \cC_0^{-},
$$
is a derivation on $\cC$.
Therefore a similar  correspondence between local ($2$-local) derivations
of the Cayley algebra $\cC$ and the Malcev algebra $\cC_0^{-}$ are also true.

So, from Theorems~\ref{th:main}, \ref{2localsplit} and \ref{th:2local_division}  we obtain the following results.

\begin{theorem}
Let $\cC_0^{-}$ be {\rojo the central simple Malcev algebra associated with a Cayley algebra $\cC$ over a field $\FF$ of characteristic $\neq 2$.}  Then
\begin{enumerate}
\item the space of all local derivations of $\cC_0^{-}$ is the Lie algebra
$\frso(\cC_0,\nup);$
\item if $\cC$ is a split algebra, then every $2$-local derivation on $\cC_0^{-}$ is a derivation;
\item if $\cC$ is a division algebra, then
$$
\twoLocDer(\cC_0^{-})\cong \LocDer(\cC_0^{-})\cong
\frso(\cC_0,\nup).
$$
\end{enumerate}
\end{theorem}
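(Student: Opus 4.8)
The plan is to deduce all three statements from the corresponding results for the Cayley algebra $\cC$ (Theorems~\ref{th:main}, \ref{2localsplit}, \ref{th:2local_division}) by transporting them along the isomorphism $\Der(\cC)\cong\Der(\cC_0^{-})$ recalled just above, namely restriction $d\mapsto d\vert_{\cC_0}$ with inverse the extension $\delta\mapsto\tilde\delta$ determined by $\tilde\delta(1)=0$. Since $\mathrm{char}\,\FF\neq 2$ we have $t(1)=\nup(1,1)=2\neq 0$, so $\cC=\FF 1\oplus\cC_0$ and $\cC_0^{-}$ coincides with $\cC_0$ as a vector space; moreover every derivation of $\cC$ annihilates $1$ and therefore preserves $\cC_0$. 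The first thing I would do is upgrade this one linear bijection to bijections between the spaces of local, and of $2$-local, derivations of $\cC$ and of $\cC_0^{-}$, compatible with the Lie brackets.

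For part (1) I would pair a linear map $\Delta_0$ on $\cC_0^{-}$ with its extension $\Delta$ on $\cC$ defined by $\Delta(1)=0$, $\Delta\vert_{\cC_0}=\Delta_0$, and verify both directions. If $\Delta_0$ is a local derivation of $\cC_0^{-}$, then for $y=\lambda 1+x$ with $x\in\cC_0$ I choose a derivation $\delta$ of $\cC_0^{-}$ with $\delta(x)=\Delta_0(x)$ and observe that its extension satisfies $\tilde\delta(y)=\tilde\delta(x)=\Delta_0(x)=\Delta(y)$, so $\Delta$ is a local derivation of $\cC$. Conversely, a local derivation $\Delta$ of $\cC$ automatically kills $1$ and lies in $\frso(\cC,\nup)$ by Theorem~\ref{th:main}, hence preserves $\cC_0=(\FF 1)^\perp$; restricting the pointwise witnessing derivations then shows $\Delta\vert_{\cC_0}$ is a local derivation of $\cC_0^{-}$. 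Combined with Theorem~\ref{th:main} and the Remark identifying $\{d\in\frso(\cC,\nup)\mid d(1)=0\}$ with $\frso(\cC_0,\nup)$, this gives $\LocDer(\cC_0^{-})\cong\frso(\cC_0,\nup)$.

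For parts (2) and (3) the argument is identical with the pairwise witness $d_{x,y}$ in place of $d_x$. Given a $2$-local derivation $\Delta_0$ of $\cC_0^{-}$, its extension $\Delta(\lambda 1+x)=\Delta_0(x)$ is a $2$-local derivation of $\cC$: for $p=\lambda 1+x$ and $q=\mu 1+y$ the extension $\tilde d_{x,y}$ of a witnessing derivation of $\cC_0^{-}$ agrees with $\Delta$ on both $p$ and $q$; conversely a $2$-local derivation of $\cC$ restricts to one of $\cC_0^{-}$ (using Corollary~\ref{2local_linear} to know it annihilates $1$ and preserves $\cC_0$). Thus $\twoLocDer(\cC_0^{-})$ corresponds to $\twoLocDer(\cC)$ under the same bijection. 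In the split case Theorem~\ref{2localsplit} makes every $2$-local derivation of $\cC$ a derivation, whence its restriction is a derivation of $\cC_0^{-}$, giving (2). In the division case Theorem~\ref{th:2local_division} gives $\twoLocDer(\cC)=\LocDer(\cC)$, so the two corresponding spaces for $\cC_0^{-}$ coincide, and both equal $\frso(\cC_0,\nup)$ by part (1), giving (3).

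The only genuinely non-formal ingredient is the equivalence ``$d$ is a derivation of $\cC$ $\Leftrightarrow$ $d\vert_{\cC_0}$ is a derivation of $\cC_0^{-}$'', i.e.\ that a derivation of the Malcev algebra really extends to the whole Cayley algebra, which is quoted from \cite[Chapter IV, Lemma 6.1]{EM94}; once this single bijection is in hand, every remaining step is a routine check that it is compatible with the quantifier ``pick a (pair of) witnessing derivation(s)'' defining local and $2$-local derivations. The point to be careful about is the direction from $\cC$ to $\cC_0^{-}$, where one must first invoke Theorem~\ref{th:main} (respectively Corollary~\ref{2local_linear}) to guarantee that a local ($2$-local) derivation of $\cC$ annihilates $1$ and leaves $\cC_0$ invariant, so that the restriction is defined and lands in $\frso(\cC_0,\nup)$.
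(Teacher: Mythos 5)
Your proposal is correct and follows exactly the paper's route: the paper also transports Theorems~\ref{th:main}, \ref{2localsplit} and \ref{th:2local_division} along the restriction/extension bijection $\Der(\cC)\leftrightarrow\Der(\cC_0^{-})$ from \cite[Chapter IV, Lemma 6.1]{EM94}, merely asserting that ``a similar correspondence'' holds for local and $2$-local derivations. Your write-up supplies the routine verifications of that correspondence (including the needed appeal to Theorem~\ref{th:main} and Corollary~\ref{2local_linear} to see that local and $2$-local derivations of $\cC$ kill $1$ and preserve $\cC_0$) that the paper leaves implicit.
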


\begin{remark}
A Cayley  algebra 	$\cC$ over a field $\FF$ of {\rojo characteristic
$\neq 2$} equipped with the multiplication
$$
x\circ y=\frac{1}{2}(xy+yx)
$$
becomes a Jordan algebra $\left(\cC^+, \circ\right)$.	Using the description
of derivations  of the algebra $\cC^+$ \cite[Chapter IV, Page 175]{EM94}, we obtain the following isomorphisms:
$$
\LocDer(\cC)\cong \LocDer(\cC_0^{-})\cong \Der(\cC^+) \cong \frso(\cC_0,\nup).
$$
\end{remark}

\bigskip

\end{document}